\documentclass[10pt]{article}
\font\smallit=cmti10
\font\smalltt=cmtt10

\usepackage{caption}
\usepackage{color}
\usepackage{amssymb,latexsym,amsmath,epsfig,amsthm} 
\usepackage{empheq}
\usepackage{multicol}
\usepackage{here}
\usepackage{url}
\usepackage{ascmac}
\makeatletter
\usepackage{here}
\usepackage{comment}

\renewcommand\section{\@startsection {section}{1}{\z@}
{-30pt \@plus -1ex \@minus -.2ex}
{2.3ex \@plus.2ex}
{\normalfont\normalsize\bfseries\boldmath}}

\renewcommand\subsection{\@startsection{subsection}{2}{\z@}
{-3.25ex\@plus -1ex \@minus -.2ex}
{1.5ex \@plus .2ex}
{\normalfont\normalsize\bfseries\boldmath}}

\renewcommand{\@seccntformat}[1]{\csname the#1\endcsname. }

\makeatother
\newtheorem{theorem}{Theorem}
\newtheorem{lemma}{Lemma}

\theoremstyle{definition}
\newtheorem{definition}{Definition}
\newtheorem{remark}{Remark}

\begin{document}

\begin{center}
\uppercase{\bf Variants of Wythoff's Games with Different Terminal Sets}
\vskip 20pt
{\bf  Kahori Komaki  }\\
{\smallit Keimei Gakuin Junior and High School, Kobe City, Japan}\\
{\tt xiaomukahori@gmail.com}
\vskip 10pt
{\bf Ryohei Miyadera}\\
{\smallit Keimei Gakuin Junior and High School, Kobe City, Japan}\\
{\tt runnerskg@gmail.com}
\vskip 10pt
{\bf Aoi Murakami }\\
{\smallit Kwansei Gakuin University }\\
{\tt atatpj728786.55986@gmail.com}
\end{center}
\vskip 20pt
\centerline{\smallit Received: , Revised: , Accepted: , Published: } 
\vskip 30pt


\centerline{\bf Abstract}
\noindent
We study a variant of the classical Wythoff's game. The classical form is played with two piles of stones, from which two players take turns to remove stones from one or both piles. When removing stones from both piles, an equal number must be removed from each. The player who removes the last stone or stones is the winner.
Equivalently, we consider a single chess queen placed somewhere on a large grid of squares. Each player can move the queen toward the upper-left corner of the grid, either vertically, horizontally, or diagonally in any number of steps. The winner is the player who moves the queen to the terminal position in the upper-left corner, the position $(0,0)$ in our coordinate system. Let $\mathbb{Z}_{\geq0}$ and $\mathbb{N}$ be the set of non-negative integers and the set of positive integers. Let $k \in \mathbb{N}$, and we consider the variant of Wythoff's game with the 
 terminal set $\{(x,y):x,y \in \mathbb{Z}_{\geq0} \text{ and } x+y \leq k\}$. The set of P-positions of this 
 variant is described by the Fibonacci sequence without using recursion.

 \pagestyle{myheadings} 
 \markright{\smalltt \hfill} 
 \thispagestyle{empty} 
 \baselineskip=12.875pt 
 \vskip 30pt

\section{Introduction to Wythoff's Game and its Variant} 
Let $\mathbb{Z}_{\geq0}$ and $\mathbb{N}$ be the sets of non-negative numbers and natural numbers, respectively. 
We study a variant of the classical Wythoff's game. The classical form is played with two piles of stones, from which two players take turns to remove stones from one or both piles. When removing stones from both piles, an equal number must be removed from each. The player who removes the last stone or stones is the winner.
Equivalently, we consider a single chess queen placed somewhere on a large grid of squares. Each player can move the queen toward the upper-left corner of the grid, either vertically, horizontally, or diagonally in any number of steps. The winner is the player who moves the queen to the upper-left corner, position $(0,0)$ in our coordinate system. We call $(0,0)$ the terminal position of Wythoff's game.
In our variant of Wythoff's game, we have a set of positions $\{(x,y):x+y \leq k\}$ for some $k \in \mathbb{N}$ as the terminal set. The player who moves in this terminal set is the winner.

First, we define Wythoff's game. For the details of Wythoff's game, see \cite{wythoffpaper}.

\begin{definition}\label{wythoff}
Wythoff's game is played with two piles of stones. Two players take turns removing stones from one or both piles. When removing stones from both piles, the number of stones removed from each pile should be equal. The player who removes the last stone or stones wins.
An equivalent description of the game is that a single chess queen is placed somewhere on a large grid of squares, and each player can move the queen towards the upper-left corner of the grid, either vertically, horizontally, or diagonally, for any number of steps.  The winner is the player who moves the queen to the upper-left corner.
\end{definition}

Many people have proposed many variants of Wythoff's game, and the author of the present article also presented a variant in \cite{suetsugu2020} and \cite{integer2025}.
We define a new variant of Wythoff's game that generalizes the results of our article \cite{komaki}.
\begin{definition}\label{wythoffvar}
This variant of Wythoff's game is played like the classical Wythoff’s game, with the terminal set 
 $\{(x,y):x+y \leq k\}$ for  $k \in \mathbb{N}$. 
\end{definition}

Figure~\ref{moveofqueen}  shows the moves that the queen can make, and 
Figures ~\ref{chessboard} and \ref{chessboard2} show the terminal positions of the classical Wythoff's game and the game in Definition \ref{wythoffvar} when $k=5$, respectively.

\begin{figure}[H]
\begin{tabular}{ccc}
\begin{minipage}[t]{0.33\textwidth}
\begin{center}
\includegraphics[height=2.cm]{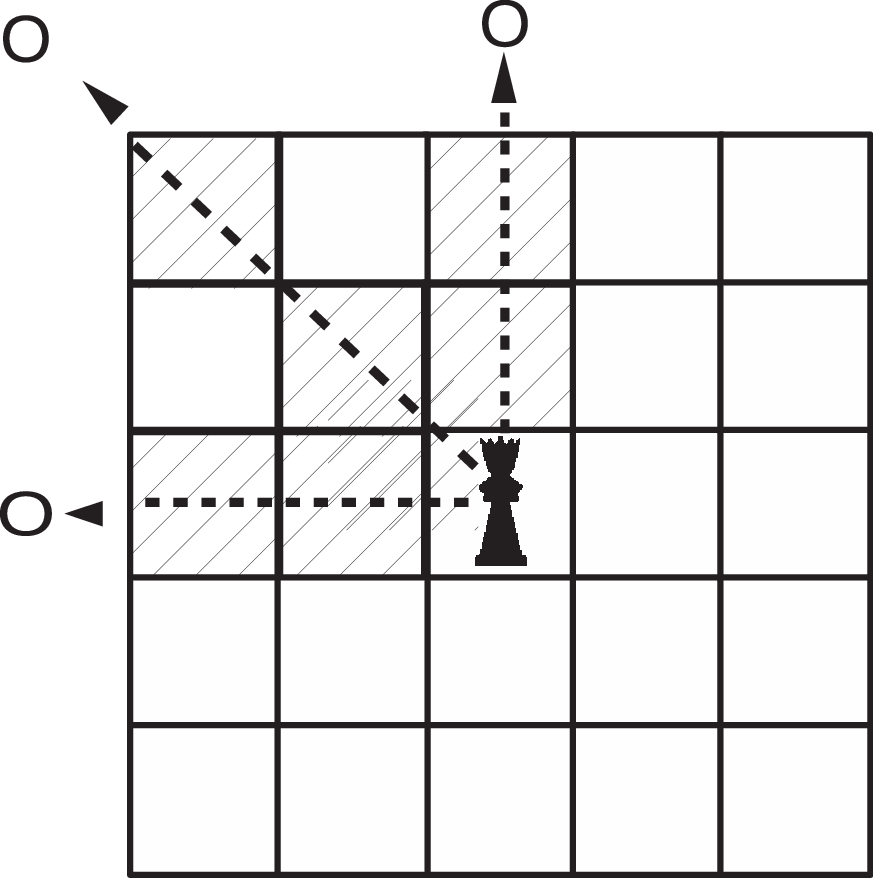}
\captionsetup{labelsep = period}
\caption{The moves of \\ the queen}
\label{moveofqueen}
\end{center}
\end{minipage}
\begin{minipage}[t]{0.33\textwidth}
\begin{center}
	\includegraphics[height=2.5cm]{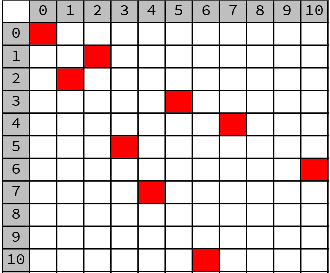}
\captionsetup{labelsep = period}
\caption{The terminal \\ position of Wythoff's \\game}
\label{chessboard}
\end{center}
\end{minipage}
\begin{minipage}[t]{0.33\textwidth}
\begin{center}
\includegraphics[height=2.5cm]{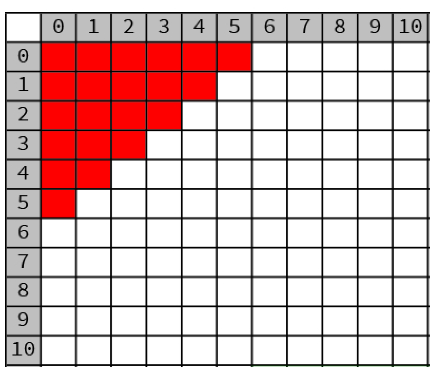}
\captionsetup{labelsep = period}
\caption{The terminal \\ positions of the game in \\ Definition \ref{wythoffvar} with $k=5$}
\label{chessboard2}
\end{center}
\end{minipage}
\end{tabular}
\end{figure}

We define $\textit{move}(x,y)$ in Wythoff's game and its variant in Definition \ref{movewythoff}.
\begin{definition}\label{movewythoff}
$\textit{move}(x,y)$ is the set of all positions that can be reached from $(x,y)$.
 For any $x,y \in Z_{\ge 0}$, let
	\begin{flalign}
	 & \textit{move}(x,y)	=M_1(x,y) \cup M_2(x,y) \cup M_3(x,y),\text{where } & & \nonumber \\
		& M_1(x,y)= \{(u,y):u<x \text{ and } u \in Z_{\ge 0}\}, M_2(x,y)=\{(x,v):v<y \text{ and } v \in Z_{\ge 0}\},& \nonumber \\
		& \text{ and } & \nonumber \\
		& M_3(x,y)=\{(x-t,y-t): 1 \leq t \leq \min(x,y) \text{ and } t \in Z_{\ge 0}\}.& \nonumber
	\end{flalign}
\end{definition}

\begin{remark}\label{explainm1m2m3}
$M_1(x,y), M_2(x,y), M_3(x,y)$ are the sets of horizontal, vertical, and diagonal moves, respectively.
$M_3(x,y)$ is an empty set if $x = 0$ or $y = 0$.
For Wythoff's game $\textit{move}(0,0)= \emptyset$, and for the game in Definition \ref{wythoffvar}, 
$\textit{move}(x,y)= \emptyset$ for $x, y \in \mathbb{Z}_{\geq0}$ such that $x+y \leq k$.
\end{remark}  

\section{Combinatorial Game Theory Definitions and a Theorem}
For completeness, we briefly review some of the necessary concepts in combinatorial game theory by referring to $\cite{lesson}$ and \cite{combysiegel}.

Wythoff's game is an impartial game without drawings; only two outcome classes are possible.
\begin{definition}\label{NPpositions}
A position is referred to as a $\mathcal{P}$-position if it is the winning position for the previous player (the player who has just moved), as long as the player plays correctly at each stage. A position is referred to as an $\mathcal{N}$-position if it is the winning position for the next player, as long as they play correctly at each stage.
\end{definition}

\begin{definition}\label{defofmex}
The \textit{minimum excluded value} ($\textit{mex}$) of a set $S$ of nonnegative integers is the least nonnegative integer that is not in S.
\end{definition}

\begin{definition}\label{defofmexgrundy}
Let $\mathbf{p}$ be a position in the impartial game. The associated \textit{Grundy number} is denoted by $G(\mathbf{p})$ and is 
 recursively defined by 
	$G(\mathbf{p}) = \textit{mex}(\{G(\mathbf{h}): \mathbf{h} \in \textit{move}(\mathbf{p})\}).$
\end{definition}

The next result demonstrates the usefulness of the Sprague--Grundy theory for impartial games.
\begin{theorem}[\cite{lesson}]\label{theoremofsumg}
For the Grundy number in Definition \ref{defofmexgrundy}, we have that 
	$G(\mathbf{p})=0$ if and only if $\mathbf{p}$ is the $\mathcal{P}$-position. \\
\end{theorem}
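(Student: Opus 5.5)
We prove Theorem~\ref{theoremofsumg} by well-founded induction on positions. The game is finite and acyclic: every move strictly decreases the nonnegative integer $x+y$. So we may argue by strong induction on $x+y$, or more generally on the length of the longest play from $\mathbf{p}$, which is finite because $\textit{move}(\mathbf{p})$ is finite and every move lowers this quantity. The plan is to show directly that the set $\{\mathbf{p}: G(\mathbf{p})=0\}$ has the two properties that characterize the $\mathcal{P}$-positions, and then to conclude by induction that it coincides with the set of $\mathcal{P}$-positions.

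\textbf{The two properties of the Grundy number.} Both follow from Definition~\ref{defofmex}.
\begin{enumerate}
\item If $G(\mathbf{p})=0$, then $0\notin\{G(\mathbf{h}):\mathbf{h}\in\textit{move}(\mathbf{p})\}$. Hence every option $\mathbf{h}$ of $\mathbf{p}$ has $G(\mathbf{h})\neq 0$.
\item If $G(\mathbf{p})\neq 0$, then since the mex is positive, $0$ belongs to that set. Hence some option $\mathbf{h}$ of $\mathbf{p}$ has $G(\mathbf{h})=0$.
\end{enumerate}
A terminal position, one with $\textit{move}(\mathbf{p})=\emptyset$ as in Remark~\ref{explainm1m2m3}, has $G(\mathbf{p})=\textit{mex}(\emptyset)=0$.

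\textbf{The induction.} Assume the claim holds for all positions reachable from $\mathbf{p}$.
\begin{itemize}
\item Base case: $\mathbf{p}$ is terminal. Then the next player cannot move, and the previous player has won. So $\mathbf{p}$ is a $\mathcal{P}$-position, consistent with $G(\mathbf{p})=0$. Under the convention of Definition~\ref{wythoffvar}, the player who moves into the terminal set wins, so this matches.
\item Case $G(\mathbf{p})=0$. By property 1 and the induction hypothesis, every option is an $\mathcal{N}$-position. Whatever the next player does, the opponent then has a winning continuation, so $\mathbf{p}$ is a $\mathcal{P}$-position.
\item Case $G(\mathbf{p})\neq0$. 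By property 2 there is an option $\mathbf{h}$ with $G(\mathbf{h})=0$. By the induction hypothesis $\mathbf{h}$ is a $\mathcal{P}$-position. Moving there wins for the next player, so $\mathbf{p}$ is an $\mathcal{N}$-position.
\end{itemize}
Since exactly one of the two outcome classes applies (there are no draws), we get: $G(\mathbf{p})=0$ if and only if $\mathbf{p}$ is a $\mathcal{P}$-position.

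\textbf{Main obstacle.} The only delicate point is justifying the induction, namely that the game terminates. Here this is immediate because each move in $M_1$, $M_2$ or $M_3$ reduces $x+y$ by at least $1$. We must also check that the outcome classes in Definition~\ref{NPpositions} are well defined, which holds because the game is finite and impartial.
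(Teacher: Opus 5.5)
Your proof is correct. It is the standard argument: induction on $x+y$, using that $\textit{mex}(S)=0$ iff $0\notin S$, together with the recursive characterization of $\mathcal{P}$- and $\mathcal{N}$-positions. The paper gives no proof of its own and simply cites \cite{lesson}, where essentially this same induction is carried out.
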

Using Theorem \ref{theoremofsumg}, we can determine the $\mathcal{P}$-position by calculating the Grundy numbers.
\section{Fibonacci Word}\label{fiboword}
For any lists $A=\{a_1,a_2,\dots, a_n\}$ and $B=\{b_1,b_2,\dots, b_m\}$ we denote
the list $\{a_1,a_2, \dots, a_n, b_1,b_2,\dots, b_m\}$
by $A \uplus B$. For example for $A=\{1,2,3,6\}$ and $B=\{1,3,7,8\}$ 
 $A \uplus B = \{1,2,3,6,1,3,7,8\}$.

Let $S_1 = 0$, $S_2 = 1$, and $S_{n+2} = S_{n+1}S_n$ for $n > 0$;
in other words, $S_{n+2}$ is formed by placing $S_n$ at the right of $S_{n+1}$.
By repeating this process indefinitely, we get a sequence
$0,1,1,0,1,0,1,1,0,1,1,0,\dots $.
By \cite{knuth}, this sequence is the same as the sequence
\begin{equation}
 \lfloor \frac{n+2}{\varphi} \rfloor -  \lfloor \frac{n+1}{\varphi} \rfloor,  \label{knuthseq}  
\end{equation}
for $n \in \mathbb{N}$ and $\varphi= \frac{1+\sqrt{5}}{2}$.

Similarly, starting from the finite sequence $C_{1,1}=\{1\}$, we construct further finite sequences by repeatedly applying the substitution
\begin{equation*}
\sigma(x)=
  \begin{cases}
    2    & \text{if } x=1, \\
    2,1  & \text{if } x=2.
  \end{cases}
\end{equation*}
Then, we get $C_{2,1}=\{2\}$, $C_{3,1}=\{2,1\}$, $C_{4,1}=\{2,1,2\}, \dots $, and 
a sequence
\begin{equation}
 1,2,2,1,2,1,2,2,1,2,1,1,\dots,  \label{ourseq}  
\end{equation}
and by (\ref{knuthseq}), (\ref{ourseq}) is the same as 
\begin{equation}
 \lfloor \frac{n+2}{\varphi} \rfloor -  \lfloor \frac{n+1}{\varphi} \rfloor +1 =  \lfloor (n+2) \varphi \rfloor -  \lfloor (n+1) \varphi \rfloor  \label{ourseq2}  
\end{equation}
for $n \in \mathbb{N}$.
Let $|C_{i,1}|$  be the number of elements in $C_{i,1}$. Let  $F_i$ is the $i$-th term of Fibonacci sequence for $i \in \mathbb{N}$. Then, 
$|C_{i,1}|=F_i$, and for $t \in \mathbb{N}$,
the $t$-th element of $C_{n,1}$ is the $\sum_{i=1}^{n-1}F_i + t = F_{n+1}+t-1$-th term of the sequence
in (\ref{ourseq}). Hence, by (\ref{ourseq2}), the $t$-th element of $C_{n,1}$ is 
\begin{equation}
 \lfloor (F_{n+1}+t+1) \varphi \rfloor -  \lfloor (F_{n+1}+t) \varphi \rfloor.  \label{ourseq3}  
\end{equation}

\section{The Case That $k=5$}
In this section, we consider the case $k=5$; the extension to arbitrary
integers $k \geq 2$ is straightforward.
Starting from the finite sequence $C_1=\{1,1,1,1,1\}$, we construct further finite sequences by repeatedly applying the substitution
$\sigma(x)$ defined in Section \ref{fiboword}.
Thus, from $C_1=\{1,1,1,1,1\}$ we obtain $C_2=\{2,2,2,2,2\}$, and from
$C_2$ we obtain $C_3=\{2,1,2,1,2,1,2,1,2,1\}$.
From $C_3$ we obtain
\[
C_4=\{2,1,2,2,1,2,2,1,2,2,1,2,2,1,2\},\ \dots
\]
Note that $C_i=C_{i,1} \uplus  C_{i,2} \uplus C_{i,3} \uplus C_{i,4} \uplus C_{i,5}$, where
$C_{i,k}=C_{i,1}$ for $k=1,2,3,4,5$.
By concatenating these finite sequences, we define the infinite sequence
$\{c_n : n \in \mathbb{N}\} = C_1 \uplus C_2 \uplus C_3 \uplus \cdots $, namely
$\{c_n\} =$
\[
 \{1,1,1,1,1,\,2,2,2,2,2,\,2,1,2,1,2,1,2,1,2,1,\,
2,1,2,2,1,2,2,1,2,2,1,2,2,1,2,\,\dots\}.
\]

Next, we define the sequence $d_n = c_n + 1$ for $n = 1,2,\dots$,
and set
$D_1=\{2,2,2,2,2\}$, $D_2=\{3,3,3,3,3\}$,
$D_3=\{3,2,3,2,3,2,3,2,3,2\}$, $\dots$, so that
$\{d_n : n \in \mathbb{N}\} = D_1 \uplus D_2 \uplus D_3 \uplus \cdots $.
If we define a further substitution
\begin{equation*}
\rho(x)=
  \begin{cases}
    2    & \text{if } x=2, \\
    2,1  & \text{if } x=3,
  \end{cases}
\end{equation*}
then $\rho(D_i) = C_{i+1}$ for $i=1,2,\dots$.
The substitution $\rho$ is useful when we compare $C_{i+1}$ and $D_i$.

Next, for $n \in \mathbb{N}$, we let
\[
a_n = a_1 + \sum_{i=1}^{n-1} c_i, \qquad a_1 = 6,
\]
and
\[
b_n = b_1 + \sum_{i=1}^{n-1} d_i, \qquad b_1 = 12.
\]
Then the difference sequence of $\{a_n\}_{n \in \mathbb{N}}$ is
$\{c_n\}_{n \in \mathbb{N}}$, and the difference sequence of
$\{b_n\}_{n \in \mathbb{N}}$ is $\{d_n\}_{n \in \mathbb{N}}$.
The construction of $\{a_n\}$ and $\{b_n\}$ is presented in
Tables~\ref{table20n1} and~\ref{table20n2}.

\begin{table}[H] 
  \centering 
  \caption{Sequences $a_n, n=1,2,\dots $ and $c_n, n=1,2,\dots $}
  \label{table20n1} 
{
\setlength{\tabcolsep}{1pt} 
\begin{tabular}{@{\hspace{-3.4pt} \vrule width 0.2pt \hspace{0pt}}r|r|r|r|r|r|r|r|r|r|r|r|r|r|r|r|r|r|r|r|r|r|r|r|r|r|r@{\hspace{0.pt} \vrule width 0.2pt \hspace{0pt}}r|r|r|r|r|r|r|r|r|r|r|r|r|r|r|r|r|r|r|r|r|r|r|r|r|r|r@{\hspace{0.pt} \vrule width 0.2pt \hspace{0pt}}} \hline $a_n$ & 
6 &   & 7 &   & 8 &  & 9 &  & 10 & & 11 & & 13 & & 15 & & 17  &  & 19 & & 21 &  & 23 & & 24 & & 26 & & 27 & $\dots $\\ \hline
& & $\vee$ &  & $\vee$    &   & $\vee$ &  & $\vee$ & & $\vee$ & & $\vee$ &  & $\vee$ & & $\vee$ & &  $\vee$ & & $\vee$ & & $\vee$ & & $\vee$ & & $\vee$ & & $\vee$  &  & $\dots $ \\ \hline
$c_n$ & & 1 &  & 1    &   & 1 &   & 1 &  & 1 & & 2 & & 2 &  & 2 & & 2 & & 2 & & 2 & & 1 & & 2 & & 1 &  & $\dots $ \\ \hline
\end{tabular}
}
\end{table}

\begin{table}[H] 
  \centering 
  \caption{Sequences $b_n, n=1,2,\dots $ and $d_n, n=1,2,\dots $}
  \label{table20n2}        
{
\setlength{\tabcolsep}{1pt} 
\begin{tabular}{@{\hspace{-3.4pt} \vrule width 0.2pt \hspace{0pt}}r|r|r|r|r|r|r|r|r|r@{\hspace{0.4pt} \vrule width 0.2pt \hspace{0pt}}r|r|r|r|r|r|r|r|r|r@{\hspace{0.4pt} \vrule width 0.2pt \hspace{0pt}}} \hline
$b_n$ & 12 &  & 14 &  & 16 &   &  18  &  &  20  &  & 22 &  & 25 & & 28 & $\dots $\\ \hline
&  & $\vee$ &  & $\vee$   &   & $\vee$  &   & $\vee$ & & $\vee$ &  & $\vee$ & & $\vee$ & & $\dots $ \\ \hline
$d_n$ &  & 2 &   & 2   &   & 2 &   & 2  & & 2 & & 3 & & 3 & & $\dots $ \\ \hline
\end{tabular}
}
\end{table}

\begin{lemma}
$(i)$ Let $|C_i|$ and $|D_i|$ be the number of elements in $C_i$ and $D_i$ respectively. Then,
$|C_i|=|D_i|=5F_i$.\\
$(ii)$ The sum of the numbers in $C_i$ is  $\sum \{c_k:c_k \in C_i\}=5F_{i+1}.$\\
$(iii)$ The sum of the numbers in $D_i$ is  $\sum \{d_k:c_k \in D_i\}=5F_{i+2}.$\\
\end{lemma}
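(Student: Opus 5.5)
Since $C_i$ is the concatenation of five copies of $C_{i,1}$, and $D_i$ is obtained from $C_i$ by adding $1$ to each entry, the plan is to reduce every claim to facts about the single block $C_{i,1}$.

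\emph{Tracking the block.} Let $x_i$ be the number of $1$'s and $y_i$ the number of $2$'s in $C_{i,1}$. Under $\sigma$, each $1$ becomes a $2$, and each $2$ becomes one $2$ and one $1$. This gives the recursion
\[
x_{i+1}=y_i,\qquad y_{i+1}=x_i+y_i,
\]
with initial values $(x_1,y_1)=(1,0)$. Induction on $i$ then gives $x_i=F_{i-1}$ and $y_i=F_i$, with the convention $F_0=0$ and $F_1=F_2=1$. The base cases are $(1,0)$ for $i=1$ and $(0,1)$ for $i=2$. The inductive step is just $F_{i+1}=F_{i-1}+F_i$. Hence
\[
|C_{i,1}|=x_i+y_i=F_{i+1}.
\]

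\emph{An indexing issue.} This conflicts with Section~\ref{fiboword}, which claims $|C_{i,1}|=F_i$. Using $C_{1,1}=\{1\}$, $C_{2,1}=\{2\}$, $C_{3,1}=\{2,1\}$, $C_{4,1}=\{2,1,2\}$, the lengths are $1,1,2,3$. These match $F_i$ only under the shifted convention $F_1=F_2=1$ with lengths $F_i$ read as $1,1,2,3$. So I must fix the indexing carefully; this is the main obstacle. I will take $F_1=1$, $F_2=1$, $F_3=2$, and check directly that the length of $C_{i,1}$ is $1,1,2,3,\dots=F_i$. Redoing the count with this convention gives $x_i=F_{i-2}$ and $y_i=F_{i-1}$ for $i\ge 2$, so the length is $F_{i-2}+F_{i-1}=F_i$.

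\emph{The sum of a block.} The sum of $C_{i,1}$ is
\[
x_i+2y_i=F_{i-2}+2F_{i-1}=F_i+F_{i-1}=F_{i+1}.
\]
I will verify the small cases directly: for $i=1$ the sum is $1=F_2$, and for $i=2$ it is $2=F_3$. Alternatively, the sum can be obtained without counting: $\sigma$ sends an entry of value $v$ to a word of total value $v+1$. So if $s_i$ is the sum and $\ell_i$ the length of $C_{i,1}$, then $s_{i+1}=s_i+\ell_i$ and $\ell_{i+1}=s_i-\ell_i+\ell_i\cdot 0+\dots$. The cleaner route is the counting above, so I will use that.

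\emph{Conclusion.} For (i), $|C_i|=5|C_{i,1}|=5F_i$, and $|D_i|=|C_i|$ because $d_n=c_n+1$ is taken termwise. For (ii), the sum over $C_i$ is $5F_{i+1}$. For (iii), the sum over $D_i$ is the sum over $C_i$ plus $|C_i|$, which is $5F_{i+1}+5F_i=5F_{i+2}$. As a cross-check, $\rho(D_i)=C_{i+1}$ together with $\rho$ sending value $v$ to a word of value $v-1+\dots$ is consistent with these totals. The only genuine care needed is the consistent choice of Fibonacci indexing with $F_1=F_2=1$, verified against the lengths $5,5,10,15$ of $C_1,\dots,C_4$ and the sums $5,10,15,25$.
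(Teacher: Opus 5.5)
Your final argument is correct, but it takes a different route from the paper. The paper uses the splitting $C_{i+2}=C_i\uplus C_{i+1}$. Blockwise this is really $C_{i+2,1}=C_{i+1,1}C_{i,1}$, and for the five-fold $C_i$ it holds only up to rearrangement, which is harmless for counting. So both the length and the sum of $C_i$ obey the Fibonacci recurrence, and the initial values ($5,5$ for lengths, $5,10$ for sums) give $5F_i$ and $5F_{i+1}$. Part (iii) is then handled exactly as in your conclusion. You instead track the letter frequencies of a single block through the incidence recursion $x_{i+1}=y_i$, $y_{i+1}=x_i+y_i$ of $\sigma$. Your route uses only the definition of $\sigma$ as a morphism. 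The paper's splitting identity is asserted ``by definition'' but actually needs its own small induction, $\sigma^{n+1}(1)=\sigma^{n-1}(2)\sigma^{n-1}(1)=\sigma^{n}(1)\sigma^{n-1}(1)$. Your route also gives finer information, namely that $C_{i,1}$ contains $F_{i-2}$ ones and $F_{i-1}$ twos. The paper's route is shorter once the splitting is granted.

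Your write-up needs cleaning, though. The ``indexing issue'' is not real. Your first formula $x_i=F_{i-1}$, $y_i=F_i$ already fails at $i=1$, since $x_1=1\neq F_0$. With the standard $F_1=F_2=1$, your corrected formulas $x_i=F_{i-2}$, $y_i=F_{i-1}$ for $i\ge 2$ (base $(x_2,y_2)=(0,1)$, with $i=1$ checked by hand) are simply right and agree with Section~\ref{fiboword}.

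Also delete or fix two passages. The ``alternatively'' line is wrong as written. The correct relations are $\ell_{i+1}=s_i$ and $s_{i+1}=s_i+\ell_i$, because $\sigma$ sends a letter of value $v$ to a word of length $v$ and value $v+1$; these alone would give an even shorter proof. The $\rho$ cross-check is also garbled: $\rho$ sends value $v$ to a word of value $v$ and length $v-1$.
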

\begin{proof}
By the definition of $C_i$ and $D_1$,
we have 
\begin{equation}
C_{i+2}=C_{i} \uplus C_{i+1} \text{ and } D_{i+2}=D_{i} \uplus D_{i+1}.  \label{unionset} 
\end{equation}
Since $|C_1|=|D_1|=|C_2|=|D_2|=5$, by (\ref{unionset}) we obtain (i).
Since  $\sum \{c_k:c_k \in C_1\}=5$ and $\sum \{c_k:c_k \in C_2\}=10$, by (\ref{unionset}) we obtain (ii).
Since $\sum \{d_k:d_k \in D_i\}=\sum \{c_k:c_k \in C_i\} + |D_i|$, by (\ref{unionset}) we obtain (iii).
\end{proof}

\begin{lemma}\label{lemmaforpoints}
For the 
sequences $a_n,b_n$ with $n \in \mathbb{N}$, we obtain the following:\\
$(i)$  $b_n=a_n+n+5$,\\
$(ii)$ $a_{5F_{n+2}-4}=5F_{n+3}-4$;\\
$(iii)$ $b_{5F_{n+1}-4}=5F_{n+3}-3$.
\end{lemma}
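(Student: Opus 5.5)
Each of the three claims reduces to summing whole blocks, using the previous lemma (block lengths $|C_i|=|D_i|=5F_i$, block sums $5F_{i+1}$ and $5F_{i+2}$) together with the standard identity $\sum_{i=1}^{m}F_i=F_{m+2}-1$.

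\textbf{Part (i).} Since $d_i=c_i+1$, we get
\[
b_n-a_n=(b_1-a_1)+\sum_{i=1}^{n-1}(d_i-c_i)=6+(n-1)=n+5 .
\]
This needs only the definitions.

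\textbf{Part (ii).} First I locate the index of the last element of $C_1\uplus\cdots\uplus C_m$. By part (i) of the previous lemma, the number of terms up to the end of $C_m$ is
\[
N_m=\sum_{i=1}^{m}5F_i=5F_{m+2}-5 .
\]
So $a_{N_m+1}=a_1+\sum_{j=1}^{N_m}c_j$, and the sum runs over the complete blocks $C_1,\dots,C_m$. By part (ii) of the previous lemma this sum is
\[
\sum_{i=1}^{m}5F_{i+1}=5(F_{m+3}-1-F_1)=5F_{m+3}-10 .
\]
Hence
\[
a_{5F_{m+2}-4}=6+5F_{m+3}-10=5F_{m+3}-4 .
\]
Taking $m=n$ gives (ii).

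\textbf{Part (iii).} The same computation works with the $D_i$, but the indices in the statement are shifted. Here the index is $5F_{n+1}-4=N_{n-1}+1$, so the sum runs over the blocks $D_1,\dots,D_{n-1}$. By part (iii) of the previous lemma,
\[
\sum_{i=1}^{n-1}5F_{i+2}=5\bigl(F_{n+3}-1-F_1-F_2\bigr)=5F_{n+3}-15 .
\]
Adding $b_1=12$ gives $b_{5F_{n+1}-4}=5F_{n+3}-3$, as claimed.

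\textbf{Edge case and main obstacle.} For $n=1$ the sum in (iii) is empty, $F_2=1$, and the formula gives $b_1=5\cdot 3-3=12$, which checks. As a cross-check, combining (i) with (ii) at the index $5F_{n+1}-4$ gives
\[
a_{5F_{n+1}-4}+5F_{n+1}-4+5=5F_{n+2}-4+5F_{n+1}+1=5F_{n+3}-3,
\]
which agrees. The only real difficulty is the off-by-one bookkeeping: matching each index to the end of a complete block, and using the correct Fibonacci indexing ($F_1=F_2=1$) in the partial-sum identity.
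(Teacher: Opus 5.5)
Your proof is correct and matches the paper's argument for (i), via $d_i=c_i+1$ and $b_1-a_1=6$, and for (ii), by summing the complete blocks $C_1,\dots,C_n$ with the previous lemma and $\sum_{i=1}^{m}F_i=F_{m+2}-1$. For (iii) the paper uses what you present as your cross-check, combining (i) with (ii) at index $n-1$ to get $b_{5F_{n+1}-4}=a_{5F_{n+1}-4}+5F_{n+1}+1$; your main route sums the $D$-blocks directly, which is equally valid and has the small advantage of covering $n=1$ uniformly as an empty sum.
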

\begin{proof}
$(i)$ follows directly from the definitions of $a_n$ and $b_n$.\\
$(ii)$ We consider the $1+\sum_{i=1}^n|C_i|$-th term of the sequence $a_n$.
$1+\sum_{i=1}^n|C_i|=1+\sum_{i=1}^n5F_i= 1+5(F_{n+2}-1)=5F_{n+2}-4$, and 
$a_{5F_{n+2}-4}=6+\sum_{i=1}^n(\sum \{x:x \in C_i\})$ $=6+\sum_{i=1}^{n}5F_{i+1}$
$=6+5(F_{n+3}-2)=5F_{n+3}-4$.\\
$(iii)$ By $(i)$ and $(ii)$, 
$b_{5F_{n+1}-4}=a_{5F_{n+1}-4}+5F_{n+1}-4+5 = 5F_{n+3}-3$.
\end{proof}

\begin{lemma}\label{lemmaforanbmsequence}
We obtain the following statements $(i)$, $(ii)$, $(iii)$, $(iv)$, and $(v)$.\\
(i)  For $p,q \in \mathbb{N}$ such that $5F_{n+2}-4 \leq p < 5F_{n+3}-4$, $5F_{n+1}-4 \leq q < 5F_{n+2}-4$ and $b_q=a_p+1$, we have the following:\\
(i.1) if $b_{q+1}-b_q=3$, then $a_{p+1}=a_p+2$, $a_{p+2}=a_{p}+3$ and $b_{q+1}=a_{p+2}+1$;\\
(i.2) if  $b_{q+1}-b_q=2$, then $a_{p+1}=a_p+2$ and $b_{q+1}=a_{p+1}+1$.\\
(ii) $\{a_{5F_{n+2}-4+u}:u=0,1, \dots, 5F_{n+1}\}$
$\cup \{b_{5F_{n+1}-4+u}:u=0,1,\dots, 5F_{n}\}$\\
$=\{5F_{n+3}-4,\dots, 5F_{n+4}-3\}$.\\
(iii) $\{a_{5F_{n+2}-4+u}:u=0,1, \dots, 5F_{n+1}\}$
$\cap \{b_{5F_{n+1}-4+u}:u=0,1,\dots, 5F_{n}\}= \emptyset$.\\
(iv) $\{a_n,n=1,2,\dots \} \cap \{b_n,n=1,2,\dots \}= \emptyset$; \\
(v) $\{a_n,n=1,2,\dots \} \cup \{b_n,n=1,2,\dots \}= \{6,7,8,\dots \}$.
\end{lemma}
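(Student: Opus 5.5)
The plan is to treat the block of $a$-indices $5F_{n+2}-4\le p<5F_{n+3}-4$ (corresponding to $C_{n+1}$) and the block of $b$-indices $5F_{n+1}-4\le q<5F_{n+2}-4$ (corresponding to $D_n$) as matched by the substitution $\rho$, using $\rho(D_n)=C_{n+1}$. By Lemma \ref{lemmaforpoints}, the $a$-block starts at $a_{5F_{n+2}-4}=5F_{n+3}-4$ and the $b$-block starts at $b_{5F_{n+1}-4}=5F_{n+3}-3=a_{5F_{n+2}-4}+1$. So the hypothesis $b_q=a_p+1$ holds at the start of the blocks.

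For (i), I would argue that $\rho$ sends the $j$-th term $d_q$ of $D_n$ to a consecutive segment of $C_{n+1}$ starting at position $p$, where $p$ and $q$ are aligned indices. Throughout the induction below, $b_q=a_p+1$ holds exactly at aligned pairs. If $d_q=3$, then $\rho(3)=2,1$ gives $c_p=2$ and $c_{p+1}=1$. Hence $a_{p+1}=a_p+2$, $a_{p+2}=a_p+3$, and $b_{q+1}=b_q+3=a_p+4=a_{p+2}+1$. If $d_q=2$, then $\rho(2)=2$ gives $c_p=2$, so $a_{p+1}=a_p+2$ and $b_{q+1}=a_p+3=a_{p+1}+1$. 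In both cases the next pair $(p',q+1)$ is again aligned with $b_{q+1}=a_{p'}+1$. The step needing care is exactly this alignment: the hypothesis $b_q=a_p+1$ alone does not determine $c_p$. I would therefore phrase (i) as an induction on $q$ within the block, carrying along the aligned $p$ defined as $5F_{n+2}-4$ plus the length of $\rho$ applied to the prefix of $D_n$ before $d_q$.

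For (ii) and (iii), I would run this induction across the whole block. Each step covers the integers from $a_p$ up to $a_{p'}$ exactly once: either $a_p,\ a_p+1=b_q,\ a_p+2=a_{p+1}$, or, in the case $d_q=3$, also $a_p+3=a_{p+2}$ and $a_p+4=b_{q+1}$. In every case the $a$-values and $b$-values are distinct and fill the integers without gaps. At the end of the blocks, with $q=5F_{n+2}-4$ (so $u=5F_n$) and $p=5F_{n+3}-4$ (so $u=5F_{n+1}$), Lemma \ref{lemmaforpoints} gives endpoints $a=5F_{n+4}-4$ and $b=5F_{n+4}-3$. This yields exactly $\{5F_{n+3}-4,\dots,5F_{n+4}-3\}$ as a disjoint union, proving (ii) and (iii).

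For (iv) and (v), I would take the union over $n\ge1$. Consecutive ranges overlap only in their two endpoints $5F_{n+4}-4$ and $5F_{n+4}-3$, which are the starting $a$- and $b$-values of the next range, so they are consistent. The initial segment $\{6,\dots,5F_4-4\}=\{6,\dots,11\}$ is covered directly: it consists of $a_1,\dots,a_6=6,\dots,11$ (by Table \ref{table20n1}), with $b_1=12=5F_4-3$ starting the $n=1$ range. Monotonicity of $a_n$ and $b_n$ then shows that no $a$-value from one block can equal a $b$-value from another block. The main obstacle is the careful bookkeeping of the alignment in (i) and of the block endpoints, including the $n=0$ base case.
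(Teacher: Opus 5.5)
Your proposal is correct and follows essentially the same route as the paper: an induction through each block driven by $\rho(D_n)=C_{n+1}$, starting from $b_{5F_{n+1}-4}=a_{5F_{n+2}-4}+1$ and splitting on $d_q\in\{2,3\}$ to show that the $a$- and $b$-values interleave as consecutive integers, then gluing blocks for (iv) and (v). Make explicit that any $p$ with $b_q=a_p+1$ must be the aligned index, which follows from strict monotonicity of $a_n$; your first step is also more careful than the paper's, which asserts that the first entry of $D_n$ is $3$ even though $D_1=\{2,2,2,2,2\}$.
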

\begin{proof}
We prove $(i),$ $(ii)$ and $(iii)$ by mathematical induction.
Let $D_n=\{d_i,d_{i+1},  $\\
$\dots, d_{i+5F_n-1}\}$ and $C_{n+1}=\{c_j,c_{j+1}, \dots, c_{j+5F_{n+1}-1}\}.$ By the definition of two sequences $a_n$ and $b_n$,
 the difference sequences of $\{a_{5F_{n+2}-4+u}:u=0,1, \dots, 5F_{n+1}\}$ and 
$\{b_{5F_{n+1}-4+u}:u=0,1,\dots, 5F_{n}\}$ are $C_{n+1}$ and $D_n$ respectively, and $\rho(D_n)=C_{n+1}.$
For the brevity of expression, we let $s=5F_{n+2}-4$ and $t=5F_{n+1}-4$.
Then, $b_t=a_s+1.$
Since $d_i=3$ and $\rho(3) = 2, 1$, we obtain $c_j=2$ and $c_{j+1}=1$. Then,
$a_{s+1}=a_s+c_j=a_s+2,$ $a_{s+2}=a_{s+1}+c_{j+1}=a_s+3,$ $b_{t+1}=b_t+d_i=b_t+3=a_{s}+4$.
Here, $a_s ,b_t, a_{s+1}, a_{s+2}, b_{t+1}$ are consecutive integers.
As an inductive step, for $k, h$ such that
$1 \leq k \leq 5F_n-2$, $1 \leq h \leq 5F_{n+1}-2$,
let $D_{n,k}=\{d_i,d_{i+1}, \dots, d_{i+k}\}$ and $C_{n+1,h}=\{c_j,c_{j+1}, \dots, c_{j+h}\}.$

We assume that $\rho(D_{n,k})=C_{n+1,h}$,
the set $\{a_s,a_{s+1},\dots, a_{s+h+1}\} \cup $ \\
$ \{b_t,b_{t+1}, \dots, b_{t+k+1}\}$ constitutes a set of  
consecutive integers, and $a_{s+h+1}+1=b_{t+k+1}.$

For the inductive step, we need to treat two cases. \\
\noindent {\tt Case 1}: Suppose that $d_{i+k+1}=3$. Then, $\rho(3)=2,1$ and 
$c_{j+h+1}=2$ and $c_{j+h+2}=1$. Hence, $a_{s+h+1}$, $b_{t+k+1}$, 
$a_{s+h+2}=a_{s+h+1}+2$, $a_{s+h+3}=a_{s+h+2}+1=a_{s+h+1}+3$, and 
$b_{t+k+2}=b_{t+k+1}+d_{i+k+1}=a_{s+h+1}+4$ are consecutive integers.
let $D_{n,k+1}=\{d_i,d_{i+1}, \dots, d_{i+k+1}\}$ and $C_{n+1,h+2}=\{c_j,c_{j+1}, \dots, c_{j+h+2}\}.$
Then, $\rho(D_{n,k+1})=C_{n+1,h+2}$ and $b_{t+k+2}=a_{s+h+3}+1.$\\
\noindent {\tt Case 2}: Suppose that $d_{i+k+1}=2$. Then, $\rho(2)=2$ and 
$c_{j+h+1}=2$. Hence, $a_{s+h+1}$, $b_{t+k+1}=a_{s+h+1}+1$,
$a_{s+h+2}=a_{s+h+1}+2$,  and 
$b_{t+k+2}=b_{t+k+1}+2=a_{s+h+1}+3$ are consecutive integers.
Let $D_{n,k+1}=\{d_i,d_{i+1}, \dots, d_{i+k+1}\}$ and $C_{n+1,h+1}=\{c_j,c_{j+1}, \dots, c_{j+h+1}\}.$
Then, $\rho(D_{n,k+1})=C_{n+1,h+1}$ and $b_{t+k+2}=a_{s+h+2}+1$.
Therefore, by mathematical induction, we obtain $(i)$, $(ii)$ and $(iii)$, and 
hence, we obtain $(iv)$ and $(v)$.
\end{proof}

Starting from the finite sequence $C_1=\{1\}$, we construct further finite sequences by repeatedly applying the substitution
\begin{equation*}
\sigma(x)=
  \begin{cases}
    2    & \text{if } x=1, \\
    2,1  & \text{if } x=2.
  \end{cases}
\end{equation*}

\begin{theorem}\label{bneqann5}
We obtain the following:\\
$(i)$
\begin{align}
a_{5(F_{n+1}-1)+hF_n+t+1} 
 = 5F_{n+2}-4+hF_{n+1}  \nonumber + \lfloor (F_{n+1}+1+t) \varphi \rfloor  -\lfloor (F_{n+1}+1) \varphi \rfloor  \nonumber  
\end{align}
for $h,t \in \mathbb{Z}_{\geq0}$ such that $0 \leq h \leq 4$ and $1 \leq t \leq F_n$,\\
$(ii)$
\begin{align}
b_{5(F_{n+1}-1)+hF_n+t+1}
 = 
 5F_{n+3}+hF_{n+2} -3+t  + \lfloor (F_{n+1}+1+t) \varphi \rfloor  -\lfloor (F_{n+1}+1) \varphi \rfloor  \nonumber  
\end{align}
for $h,t \in \mathbb{Z}_{\geq0}$ such that $0 \leq h \leq 4$ and $1 \leq t \leq F_n$.
\end{theorem}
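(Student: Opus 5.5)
The plan is to read $a_m$ off the block structure $\{c_n\}=C_1\uplus C_2\uplus\cdots$ and then get (ii) from (i) via Lemma \ref{lemmaforpoints}(i).

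\textbf{Locating the index.} Let $m=5(F_{n+1}-1)+hF_n+t+1$. By Lemma 1(i) and $\sum_{i=1}^{n-1}F_i=F_{n+1}-1$, the blocks $C_1,\dots,C_{n-1}$ together contain exactly $5(F_{n+1}-1)$ terms. Hence
\[
a_m=a_{5F_{n+1}-4}+\bigl(\text{sum of the first } hF_n+t \text{ terms of } C_n\bigr).
\]
The starting value is $a_{5F_{n+1}-4}=5F_{n+2}-4$. This is Lemma \ref{lemmaforpoints}(ii) with $n$ replaced by $n-1$. Equivalently, $a_{5F_{n+1}-4}=6+\sum_{i=1}^{n-1}5F_{i+1}$ by Lemma 1(ii), which handles $n=1$ directly.

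\textbf{Structure of $C_n$.} Since $\sigma$ acts letter by letter (it is a morphism), $C_n=\sigma^{n-1}(1,1,1,1,1)$ is the concatenation of five copies of $\sigma^{n-1}(1)=C_{n,1}$. This is the remark $C_i=C_{i,1}\uplus\cdots\uplus C_{i,5}$ in Section 4.

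Each copy has length $F_n$, because $|C_{i,1}|=F_i$. Each copy has sum $F_{n+1}$, which is Lemma 1(ii) divided by $5$, or a direct induction from $C_{i+2,1}=C_{i+1,1}\uplus C_{i,1}$.

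So the first $hF_n+t$ terms of $C_n$ consist of $h$ full copies of $C_{n,1}$, contributing $hF_{n+1}$, followed by the first $t$ terms of $C_{n,1}$, where $0\le h\le 4$ and $1\le t\le F_n$.

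\textbf{Telescoping.} By (\ref{ourseq3}), the $j$-th element of $C_{n,1}$ equals $\lfloor (F_{n+1}+j+1)\varphi\rfloor-\lfloor (F_{n+1}+j)\varphi\rfloor$. Summing over $j=1,\dots,t$ telescopes to
\[
\lfloor (F_{n+1}+1+t)\varphi\rfloor-\lfloor (F_{n+1}+1)\varphi\rfloor .
\]
Adding this to $5F_{n+2}-4+hF_{n+1}$ gives (i).

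\textbf{Part (ii).} Apply Lemma \ref{lemmaforpoints}(i), $b_m=a_m+m+5$. Then
\[
m+5=5F_{n+1}+hF_n+t+2 .
\]
Adding this to (i) and using $F_{n+2}+F_{n+1}=F_{n+3}$ and $F_{n+1}+F_n=F_{n+2}$ gives
\[
5F_{n+3}+hF_{n+2}-3+t+\lfloor (F_{n+1}+1+t)\varphi\rfloor-\lfloor (F_{n+1}+1)\varphi\rfloor,
\]
which is (ii).

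\textbf{Main obstacle.} The step that needs care is justifying (\ref{ourseq3}), i.e.\ the identification of $C_{n,1}$ with a specific window of the Fibonacci word (\ref{ourseq}). This requires two things:
\begin{itemize}
\item the claim that $C_{n,1}$ occupies positions $F_{n+1},\dots,F_{n+2}-1$ of (\ref{ourseq}), using $1+\sum_{i=1}^{n-1}F_i=F_{n+1}$;
\item consistency of the index shift in (\ref{ourseq2}) with Knuth's characterization.
\end{itemize}
I would check this by a direct comparison for small $n$ (for example $n=1,2,3$), and then rely on the fact that the concatenation $C_{1,1}\uplus C_{2,1}\uplus\cdots$ is the sequence (\ref{ourseq}) as stated in Section \ref{fiboword}.
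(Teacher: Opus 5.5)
Your proposal is correct and is essentially the paper's proof: sum the complete blocks $C_1,\dots,C_{n-1}$ to get $a_{5F_{n+1}-4}=5F_{n+2}-4$, add $h$ full copies of $C_{n,1}$ contributing $hF_{n+1}$, telescope the first $t$ terms of the next copy via (\ref{ourseq3}), and obtain (ii) from $b_m=a_m+m+5$. One small arithmetic slip: $m+5=5F_{n+1}+hF_n+t+1$, not $5F_{n+1}+hF_n+t+2$ (your value would give the constant $-2$ instead of $-3$), and it is the correct value that yields (ii) as stated.
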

\begin{proof}
$(i)$ We consider $a_n$ for $n=5(F_1+\cdots + F_{n-1})+hF_n + t+1 = 5(F_{n+1}-1)+hF_n+t+1$. 
To calculate $a_n$, we need to use $a_1=6,$ $C_1$, $C_2$, $\dots, C_{n-1},$ $C_{n,1},\dots, C_{n,h}$ and the first $t$
 elements of $C_{n,h+1}$, where for $k=1,2,3,4,5$, $C_{n,k}=C_{n,1}$ defined in Section \ref{fiboword}.
By (\ref{ourseq3}), the first $t$ elements of $C_{n,h+1}=C_{n,1}$ are
\begin{equation}
\lfloor (F_{n+1}+i+1) \varphi \rfloor  -\lfloor (F_{n+1}+i) \varphi \rfloor \nonumber
\end{equation}
for $i=1,2, \dots, t$, and the sum of these elements is
\begin{equation}
\lfloor (F_{n+1}+t+1) \varphi \rfloor  -\lfloor (F_{n+1}+1) \varphi \rfloor. \label{fn1t1}
\end{equation}
By (\ref{fn1t1}), 
the sum of the difference sequence of $a_1=6, a_2, \dots, a_n$ is 
$\sum_{i=1}^{n-1} \sum\{c_m:c_m \in  C_i \} $    $+ \sum_{i=1}^{h} \sum\{c_m:c_m \in C_{n,i} \} $
$+\lfloor (F_{n+1}+1+t) \varphi \rfloor  -\lfloor (F_{n+1}+1) \varphi \rfloor$
$ = 5(F_2+\cdots + F_n)+hF_{n+1}$$+\lfloor (F_{n+1}+1+t) \varphi \rfloor  -\lfloor (F_{n+1}+1) \varphi \rfloor$
$=5(F_{n+2}-2)+hF_{n+1}+$ $\lfloor (F_{n+1}+1+t) \varphi \rfloor  -\lfloor (F_{n+1}+1) \varphi \rfloor$.
Since $a_1=6$, we obtain $(i).$\\
$(ii)$ is direct from $(i)$ and $(i)$ of Lemma \ref{lemmaforpoints}.
\end{proof}

\begin{definition}\label{wythoffpp}
Let 
\begin{equation}
P_{5,0}=\{(x,y):x,y \in \mathbb{Z}_{\geq0} \text{ and } x+y \leq 5 \}, \nonumber    
\end{equation}
\begin{equation}
P_{5,1}=\{(b_n,a_n ):n \in \mathbb{N} \}, \nonumber    
\end{equation}
\begin{equation}
P_{5,2}= \{(a_n,b_n):n \in \mathbb{N} \}, \nonumber    
\end{equation}
and
\begin{equation}
P_5=P_{5,0} \cup P_{5,1} \cup P_{5,2}. \nonumber    
\end{equation}
\end{definition}

\begin{lemma}\label{twopattern}
For sequences $a_n, b_n \in \mathbb{N}$, we have the following:\\
$(i)$ $a_{n-1}-a_n=1$ or $2$;\\
$(ii)$ if $a_{n-1}-a_n=1$, then $b_{n-1}-b_n=2$;\\
$(ii)$ if $a_{n-1}-a_n=2$, then $b_{n-1}-b_n=3$.
\end{lemma}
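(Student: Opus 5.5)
The statement as written has the differences reversed: $a_{n-1}-a_n$ is negative because both sequences are increasing. I will read it as intended, with $a_n-a_{n-1}=c_{n-1}$ and $b_n-b_{n-1}=d_{n-1}$, and prove it in that form for $n\ge 2$.

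The proof then comes straight from the definitions. The only substantive point is that every $c_m$ lies in $\{1,2\}$.

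\textbf{Part (i).} By definition, $a_n = a_1+\sum_{i=1}^{n-1}c_i$, so $a_n-a_{n-1}=c_{n-1}$. It remains to show $c_m\in\{1,2\}$ for every $m$.
\begin{itemize}
\item The infinite sequence $\{c_n\}$ is the concatenation $C_1\uplus C_2\uplus\cdots$.
\item $C_1=\{1,1,1,1,1\}$ has all entries in $\{1,2\}$.
\item Each $C_{i+1}$ is obtained from $C_i$ by applying $\sigma$ entrywise. Since $\sigma(1)=2$ and $\sigma(2)=2,1$, the set $\{1,2\}$ is closed under $\sigma$.
\item By induction on $i$, every entry of every $C_i$ lies in $\{1,2\}$.
\end{itemize}
Hence $c_{n-1}\in\{1,2\}$.

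\textbf{Parts (ii) and (iii).} Again by definition, $b_n-b_{n-1}=d_{n-1}$, and $d_m=c_m+1$ for all $m$. Therefore
\begin{itemize}
\item if $a_n-a_{n-1}=c_{n-1}=1$, then $b_n-b_{n-1}=2$;
\item if $a_n-a_{n-1}=c_{n-1}=2$, then $b_n-b_{n-1}=3$.
\end{itemize}
Equivalently, this follows from Lemma \ref{lemmaforpoints}(i): $b_n=a_n+n+5$ gives $b_n-b_{n-1}=(a_n-a_{n-1})+1$.

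There is no genuine obstacle. The only care needed is the closure of $\{1,2\}$ under $\sigma$, which is a one-line induction, and applying the indexing convention consistently for $n\geq 2$.
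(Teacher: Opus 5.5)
Your proof is correct and matches the paper's. The paper likewise uses only that $c_n\in\{1,2\}$, together with $b_n=a_n+n+5$ from Lemma~\ref{lemmaforpoints}(i); your direct use of $d_m=c_m+1$ is the same fact. Reading the differences as $a_n-a_{n-1}=c_{n-1}$ is the right correction, since the paper's one-line proof writes $a_{n-1}-a_n=c_n$ with the same sign and index slip. Your closure-under-$\sigma$ induction also supplies the justification for $c_n\in\{1,2\}$ that the paper leaves implicit.
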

\begin{proof}
Since $a_{n-1}-a_n=c_n$ and $c_n=1,2$, by Lemma \ref{lemmaforpoints}, we obtain (i), (ii), and (iii).
\end{proof}

\begin{lemma}\label{positionsofpposition}
$(i)$ For $\mathcal{P}$-positions $(x,y)$ and $(z,w)$, if $x=z$, then $y=w$, and if $y=w$, then $x=z$.\\
$(ii)$ For any $c \in \mathbb{Z}_{\geq0}$, there exist  $x,y  \in \mathbb{Z}_{\geq0}$ such that $(c,y)$ and $(x,c)$ are $\mathcal{P}$-positions.
\end{lemma}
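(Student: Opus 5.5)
Throughout, a position in the terminal set $\{(x,y):x+y\le 5\}$ has $\textit{move}(x,y)=\emptyset$, so its Grundy number is $0$ and it is a $\mathcal{P}$-position. Two such positions can share a coordinate, e.g.\ $(0,0)$ and $(0,5)$. So I will read part $(i)$ as a statement about $\mathcal{P}$-positions with $x+y\ge 6$. I would prove it directly from Definition \ref{movewythoff} and Theorem \ref{theoremofsumg}, without using the sequences $a_n,b_n$; the identification of the $\mathcal{P}$-positions with $P_5$ is what this lemma is meant to serve.

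\emph{Preliminary observation.} If $(x,y)$ is a $\mathcal{P}$-position with $x+y\ge 6$, then $x\ge 6$ and $y\ge 6$. Indeed, if $x\le 5$, then $y>5-x\ge 0$, so the vertical move to $(x,5-x)\in M_2(x,y)$ is available. That target is terminal, hence a $\mathcal{P}$-position. A position with a move to a $\mathcal{P}$-position has Grundy number $\neq 0$, a contradiction. The case $y\le 5$ is symmetric, using $M_1$.

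\emph{Part (i).} Let $(x,y)$ and $(x,w)$ be non-terminal $\mathcal{P}$-positions with $y<w$. Then $(x,y)\in M_2(x,w)$, so the $\mathcal{P}$-position $(x,w)$ has a move to a $\mathcal{P}$-position, which is impossible by Theorem \ref{theoremofsumg}. Hence $y=w$. The row case uses $M_1$ in the same way. The preliminary observation also shows that a non-terminal $\mathcal{P}$-position never shares a column or row with a terminal one, since then $x\le5$ or $y\le5$. So $(i)$ holds as soon as at least one of the two positions is non-terminal.

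\emph{Part (ii).} For $c\le 5$ take the terminal positions $(c,0)$ and $(0,c)$. For $c\ge 6$, suppose for contradiction that no $(c,y)$ is a $\mathcal{P}$-position. Then for every $y\ge 6$ the position $(c,y)$ is an $\mathcal{N}$-position, so it has a move to some $\mathcal{P}$-position $(u,v)$. The following hold for this target.
\begin{itemize}
\item It cannot lie in column $c$ (by the assumption), so it comes from $M_1$ or $M_3$, and $u<c$.
\item It cannot be terminal: a terminal target has $v\le 5$, whereas $M_1$ preserves $v=y\ge 6$; and an $M_3$ target $(c-t,y-t)$ has $(c-t)+(y-t)\le 5$, forcing $c-t\le 5$.
\end{itemize}
If instead $c-t\le5$, the preliminary observation shows a non-terminal $\mathcal{P}$-position cannot have that column either. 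So $(u,v)$ is a non-terminal $\mathcal{P}$-position with $6\le u<c$.

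By part $(i)$, each column $u\in\{6,\dots,c-1\}$ contains at most one such $\mathcal{P}$-position. Each of these at most $c-6$ positions can serve as the target for at most two values of $y$: the value $y=v$ via a row move, and $y=v+(c-u)$ via a diagonal move. Thus at most $2(c-6)$ values of $y\ge6$ are covered, contradicting the fact that every one of the infinitely many $y\ge 6$ needs a target. The row statement $(x,c)$ follows by the symmetry $(x,y)\mapsto(y,x)$ of the game.

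The main obstacle is the bookkeeping in $(ii)$: ruling out terminal targets and targets in columns $u\le5$. This is exactly what the preliminary observation is for, and I would state it first.
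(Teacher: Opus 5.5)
Your part (i) and the strategy for (ii) are essentially the paper's. Part (i) is the same one-move contradiction. Part (ii) is the same finiteness idea, transposed and run by contradiction: you go up column $c$, while the paper goes far out along row $c$ to a point $(d,c)$ whose vertical and diagonal moves miss the finite set $Q$ of $\mathcal{P}$-positions in rows $\le c$, so its winning move must be horizontal. Your restriction of (i) to pairs with at least one non-terminal position is a necessary correction that the paper misses. Since terminal positions have no moves, $(0,0)$ and $(0,5)$ are both $\mathcal{P}$-positions. (The paper's proof of (i) also writes $M_1$ where $M_2$ is meant.)

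There is, however, a false step in (ii). The bullet asserting that the target ``cannot be terminal'' does not exclude terminal targets reached by a diagonal move. Deriving $c-t\le 5$ is not a contradiction, and your next sentence only rules out \emph{non-terminal} $\mathcal{P}$-positions in columns $\le 5$. Such terminal targets really occur. For $c=6$ and $6\le y\le 11$, the move $(6,y)\to(0,y-6)$ enters the terminal set (e.g.\ $(6,6)\to(0,0)$). So the conclusion that $(u,v)$ is non-terminal with $6\le u<c$ is wrong, and for $c=6$ your bound $2(c-6)=0$ would claim that no $y$ is covered at all.

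The repair stays inside your argument; either of two fixes works.
\begin{itemize}
\item[(a)] Use only $y\ge c+6$. Since $t\le c$, every diagonal target then satisfies $(c-t)+(y-t)\ge y-c\ge 6$, so it is non-terminal.
\item[(b)] Add the finitely many terminal positions to the count. A terminal $(u,v)$ with $u<c$ is reachable from $(c,y)$ with $y\ge 6$ only diagonally, and only when $y=v+c-u$.
\end{itemize}
Either way only finitely many $y$ are covered, and the contradiction stands. The paper sidesteps this bookkeeping by putting all $\mathcal{P}$-positions of the strip, terminal or not, into the single finite set $Q$.
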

\begin{proof}
$(i)$ If $x=z$ and $y \ne w$, then we can move from $(x,y)$ to $(z,w)$ or $(z,w)$ to $(x,y)$ by $M_1$. Since both of $(x,y),(z,w)$ are $\mathcal{P}$-positions, this leads to a contradiction. Hence $y=w$.
Similarly, if $y=w$, then $x=z$.\\
$(ii)$ Let $c \in \mathbb{Z}_{\geq0}$.
Let $Q=\{(x,y): y \leq c \text{ and } (x,y) \text{ is a $\mathcal{P}$-position}\}.$
By $(i)$, the number of elements in $Q$ is finite. Let $d \in \mathbb{Z}_{\geq0}$ be a sufficiently large number such that $(d,c) \notin Q$ and $M_2(d,c) \cap Q = M_3(d,c) \cap Q = \emptyset$.
Since $(d,c)$ is a $\mathcal{N}$-position, $M_1(d,c) \cap Q \ne \emptyset$.
Therefore, there exists $x \in \mathbb{Z}_{\geq0}$ such that $(x,c)$ is a $\mathcal{P}$-position.
Similarly we can prove that there exists $y \in \mathbb{Z}_{\geq0}$ such that $(c,y)$ is a $\mathcal{P}$-position.
\end{proof}

\begin{lemma}\label{cannotbym3}
Let $h \in \mathbb{Z}_{\geq0}$.
$(i)$ If we start with a position $(b_n+h,a_n)$, where $(b_n,a_n) \in P_{5,1}$,  we cannot reach a position $(x,y) \in P_5$ by the diagonal move $M_3$.\\
$(ii)$ If we start with a position 
 $(a_n,b_n+h)$, where $(a_n,b_n) \in P_{5,12}$, we cannot reach a position $(x,y) \in P_5$ by the diagonal move $M_3$.
\end{lemma}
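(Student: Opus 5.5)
The plan is to use the fact that a diagonal move $M_3$ preserves the difference $x-y$ of the coordinates, and that the three parts of $P_5$ are sharply separated by this difference. By Lemma \ref{lemmaforpoints}(i), every point $(b_m,a_m)\in P_{5,1}$ satisfies $x-y=b_m-a_m=m+5$, which is strictly increasing in $m$. Every point $(a_m,b_m)\in P_{5,2}$ has $x-y=-(m+5)<0$. Every point of $P_{5,0}$ satisfies $|x-y|\le x+y\le 5$. I also use that $a_m$ is strictly increasing in $m$, since its difference sequence $\{c_i\}$ takes only the values $1$ and $2$.

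For (i), the start is $(b_n+h,a_n)$ and a diagonal move with $1\le t\le a_n$ leads to $(b_n+h-t,\,a_n-t)$. This point has difference $x-y=n+5+h\ge 6$, so I rule out each part of $P_5$ in turn.
\begin{itemize}
\item \emph{$P_{5,2}$:} every point there has negative difference, so the target is not in $P_{5,2}$.
\item \emph{$P_{5,0}$:} a point there has $x-y\le x+y\le 5<6$, so the target is not in $P_{5,0}$.
\item \emph{$P_{5,1}$:} if $(b_n+h-t,a_n-t)=(b_m,a_m)$, comparing differences gives $m+5=n+5+h$, so $m=n+h\ge n$. Then $a_m\ge a_n>a_n-t$ by monotonicity, contradicting $a_m=a_n-t$.
\end{itemize}
Hence no diagonal move from $(b_n+h,a_n)$ lands in $P_5$. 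The case $h=0$, where the starting point itself lies in $P_{5,1}$, is included, since $t\ge 1$.

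For (ii), I read $P_{5,12}$ as the evident typo for $P_{5,2}$. The argument is the mirror image: the start is $(a_n,b_n+h)$ and the target is $(a_n-t,\,b_n+h-t)$, with $y-x=n+5+h\ge 6$. This excludes $P_{5,1}$ (positive $x-y$) and $P_{5,0}$ (where $|x-y|\le5$). For $P_{5,2}$, matching differences forces $m=n+h\ge n$, and then $a_m\ge a_n>a_n-t$ gives the same contradiction as before. Alternatively, (ii) follows from (i) by the symmetry $(x,y)\mapsto(y,x)$, which swaps $P_{5,1}$ with $P_{5,2}$ and fixes $P_{5,0}$.

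There is no real obstacle here. The only points needing care are the strict monotonicity of $a_m$, which follows from $c_i\ge1$, and the identity $b_m-a_m=m+5$ from Lemma \ref{lemmaforpoints}(i); everything else is bookkeeping of coordinate differences.
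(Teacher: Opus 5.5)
Your proof is correct and is essentially the paper's argument. The paper shows that the slope from the start to a target, $\frac{a_n-a_i}{b_n+h-b_i}=\frac{c_i+\cdots+c_{n-1}}{c_i+\cdots+c_{n-1}+n-i+h}$ (and the analogous slope toward $P_{5,2}$), is less than $1$; that is exactly your observation that the diagonal-move invariant $x-y$ cannot match, and both rest on $b_m-a_m=m+5$. Your version is a bit more complete, since you also explicitly rule out the terminal set $P_{5,0}$ and the indices $m\ge n$, which the paper leaves implicit.
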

\begin{proof}
Let $h \in \mathbb{Z}_{\geq0}$.
Suppose that we start with a position $(b_n+h,a_n)$, where  $(b_n,a_n) \in P_{5,1}$.
We cannot reach a position $(b_i,a_i)$ with $i<n$ by $M_3$, since 
$\frac{a_n-a_i}{b_n+h-b_i}= \frac{c_i+\dots + c_{n-1}}{d_i+\dots + d_{n-1}+h}$
$=\frac{c_i+\dots + c_{n-1}}{c_i+\dots + c_{n-1}+n-i+h}<1$.
It is clear that we cannot move to position $(a_m,b_m) \in P_{5,2}$ with $a_m < b_m$ by $M_3$,
since $\frac{a_n-b_m}{b_n+h-a_m}<1$.

Suppose that we start with a position $(a_m,b_m+h) \in P_{5,2}$. We can use a method similar to the one used 
for $(b_n+h,a_n)$ to prove that we cannot reach a position $(x,y) \in P_5$ via the diagonal move $M_3$.
\end{proof}

\begin{theorem}
The set of $\mathcal{P}$-positions of this variant of Wythoff's game is  $P_5$.
\end{theorem}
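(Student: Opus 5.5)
We use the standard characterization of the $\mathcal{P}$-positions of a finite impartial game without draws: a set $P$ equals the set of $\mathcal{P}$-positions if and only if (a) no move leads from a position of $P$ to another position of $P$, and (b) from every position outside $P$ there is a move into $P$. Here positions in $P_{5,0}$ have no moves by Remark \ref{explainm1m2m3}, so they are trivially $\mathcal{P}$-positions; equivalently, by Theorem \ref{theoremofsumg}, we show $G=0$ exactly on $P_5$. We verify (a) and (b) for $P_5$.

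\emph{Step (a).} Take $(b_n,a_n)\in P_{5,1}$ (the case $P_{5,2}$ is symmetric). A horizontal move $M_1$ or vertical move $M_2$ into $P_5$ would produce two positions of $P_5$ sharing a coordinate. By Lemma \ref{lemmaforanbmsequence}(iv),(v), each integer $\ge 6$ occurs exactly once among all $a_i,b_j$, and the $a_n,b_n$ are strictly increasing. So no two positions of $P_{5,1}\cup P_{5,2}$ share a coordinate. Reaching $P_{5,0}$ would need a coordinate sum $\le 5$. A move $M_1$ or $M_2$ keeps one coordinate $\ge 6$, so it cannot do this. Diagonal moves are excluded by Lemma \ref{cannotbym3} with $h=0$. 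For $P_{5,0}$ itself this is part of the ratio argument: a diagonal move preserves $x-y=b_n-a_n=n+5\ge 6$, while every point of $P_{5,0}$ has $|x-y|\le 5$. I will also make sure Lemma \ref{cannotbym3} really covers $P_{5,0}$; if not, this observation closes the gap.

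\emph{Step (b).} Let $(x,y)\notin P_5$ with $x+y\ge 6$. By symmetry assume $x\ge y$.
\begin{itemize}
\item If $y\le 5$, move horizontally to $(5-y,y)\in P_{5,0}$, which is a legal move since $x>5-y$.
\item If $y\ge 6$, then by Lemma \ref{lemmaforanbmsequence}(v) either $y=a_n$ or $y=b_n$ for a unique $n$.
\item If $y=b_n$, then $a_n<b_n=y\le x$, so we move horizontally to $(a_n,b_n)\in P_{5,2}$.
\item If $y=a_n$ and $x>b_n$, we move horizontally to $(b_n,a_n)$. The case $x=b_n$ is excluded because $(x,y)\notin P_5$.
\item The remaining case is $y=a_n\le x<b_n$, i.e.\ $0\le x-y<n+5$ by Lemma \ref{lemmaforpoints}(i). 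We use a diagonal move, which preserves $e=x-y$.
\end{itemize}
If $e\le 5$, we move diagonally to a point on the line $x-y=e$ with $x+y\le 5$ (for example $(e,0)$), which lies in $P_{5,0}$. This is legal because $y\ge 6$ gives room. If $6\le e\le n+4$, write $e=m+5$ with $1\le m<n$. The target is $(b_m,a_m)$, whose difference is exactly $e$. Since $a_m<a_n=y$ and $b_m=a_m+e<x$, the move is $M_3$ with $t=a_n-a_m$.

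The main obstacle is making sure the cases in (b) exhaust everything, and that every gap value $e$ is realized by a $P$-position below. The identity $b_m-a_m=m+5$ gives precisely the values $6,7,\dots$, and $P_{5,0}$ covers the values $0,\dots,5$, so together they fill all differences. The rest is bookkeeping with Lemma \ref{lemmaforanbmsequence}. Lemmas \ref{twopattern} and \ref{positionsofpposition} are not needed in this approach.
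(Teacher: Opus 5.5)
Your proof is correct, and it takes a genuinely different route from the paper.

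The paper works constructively, row by row. Lemma~\ref{positionsofpposition} guarantees that row $y=a_n$ contains some $\mathcal{P}$-position. An induction on the regions $Q_{n,m}$ (positions from which a diagonal move reaches $P_5$) then rules out every column except $b_n$; it splits into cases via Lemma~\ref{twopattern} and relies on Lemma~\ref{cannotbym3} and the figures.

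You instead verify the two kernel conditions for the candidate set directly. Your only game-theoretic ingredient is that $x-y$ is invariant under $M_3$. The differences $b_n-a_n=n+5$ run through $6,7,8,\dots$ exactly once, and $P_{5,0}$ supplies all differences in $[-5,5]$. So every diagonal from a position $(x,a_n)$ with $a_n\le x<b_n$ hits $P_5$ below it, while the diagonal through $(b_n,a_n)$ meets $P_5$ nowhere else.

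What each approach buys:
\begin{itemize}
\item Your classical Wythoff-style check is shorter and more rigorous than the figure-based case analysis.
\item It isolates exactly what is needed from the sequences: monotonicity, the complementarity in Lemma~\ref{lemmaforanbmsequence}(iv),(v), and Lemma~\ref{lemmaforpoints}(i). The Fibonacci-word structure thus enters only through complementarity.
\item It carries over verbatim to general $k$ once $b_n-a_n=n+k$ and complementarity are in hand.
\item The paper's approach, by contrast, shows how the $\mathcal{P}$-positions are forced rather than checking a given set. Both proofs still rest on the same complementarity lemma.
\end{itemize}

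Your suspicion about Lemma~\ref{cannotbym3} is justified: its proof only treats targets in $P_{5,1}$ and $P_{5,2}$, not $P_{5,0}$. But your invariant alone already excludes every diagonal move from $(b_n,a_n)$ into $P_5$. The differences $n+5$ are pairwise distinct, those of $P_{5,2}$ are negative, and those of $P_{5,0}$ are at most $5$ in absolute value. So you can drop the appeal to that lemma altogether.
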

\begin{proof}
If $x \leq 5$ or $y \leq 5$, we can move from $(x,y)$
to the set  of terminal set $\{(x,y):x+y \leq 5\}$ by
$M_2$ or $M_1$. Therefore, there is no $\mathcal{P}$-position $(x,y)$ such that $x \leq 5$ or $y \leq 5$.

\begin{figure}[H]
\begin{tabular}{cc}
\begin{minipage}[t]{0.5\textwidth}
\begin{center}
\includegraphics[height=2.3cm]{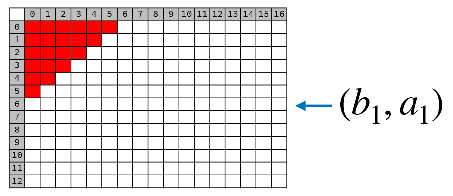}
\captionsetup{labelsep = period}
\caption{Where is $(b_1,a_1)$ ?}
\label{nearte1}
\end{center}
\end{minipage}
\begin{minipage}[t]{0.5\textwidth}
\begin{center}
\includegraphics[height=2.3cm]{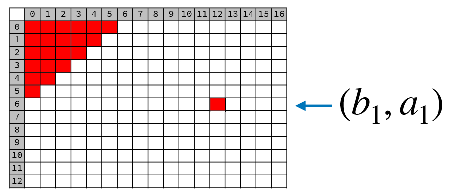}
\captionsetup{labelsep = period}
\caption{the position of $(b_1,a_1)$ }
\label{nearte2}
\end{center}
\end{minipage}
\end{tabular}
\end{figure}

By Lemma \ref{positionsofpposition}, there should be $x_1 \in \mathbb{Z}_{\geq0}$
such that  $(x_1,6)$ is a $\mathcal{P}$-position.
If $6 \leq x_1 \leq 11$, from $(x_1,6)$ we 
 can move to a position in 
$\{(x,y):x+y \leq 5\}$ by $M_3$, as you can see in Figure \ref{nearte1}.  Suppose that 
$x_1 \geq 13$. Then, $(12,6)$ is a $\mathcal{N}$-position and 
from $(12,6)$, we cannot move to any $\mathcal{P}$-position. This leads to a contradiction.
Therefore, $x_1=12.$ Then, 
$(12,6)$ is a  $\mathcal{P}$-position. By Tables \ref{table20n1} and \ref{table20n2}, 
$(12,6)=(b_1,a_1)$. See Tables \ref{nearte2}.
By Lemma \ref{positionsofpposition}, there should be $x_i \in \mathbb{Z}_{\geq0}$
such that  $(x_i,i+5)$ is a $\mathcal{P}$-position for $i=2,3,4,5,6$, and by the method that is similar to the one used to obtain that $x_1=12$, we establish that $x_2=14, x_3=16, x_4=18, x_5=20, x_6=22$. 
By the symmetrical nature of the game up to the first and the second coordinates, we know that
$(6,12),(7,14)$ are $\mathcal{P}$-positions. Here, we use mathematical induction starting from the $\mathcal{P}$-positions in Figure \ref{pvqueen0}. Note that from the set of positions 
$Q_{1,1}=\{(x,11):6 \leq x \leq 21\}$ $\cup \{(x,12):7 \leq x \leq 23\}$, we can move to positions in $P_5$ by the diagonal move $M_3$. The set $Q_{1,1}$ is denoted by the green-colored area in Figure \ref{pvqueen0}.

\begin{figure}[H]
\begin{center}
\includegraphics[height=4.cm]{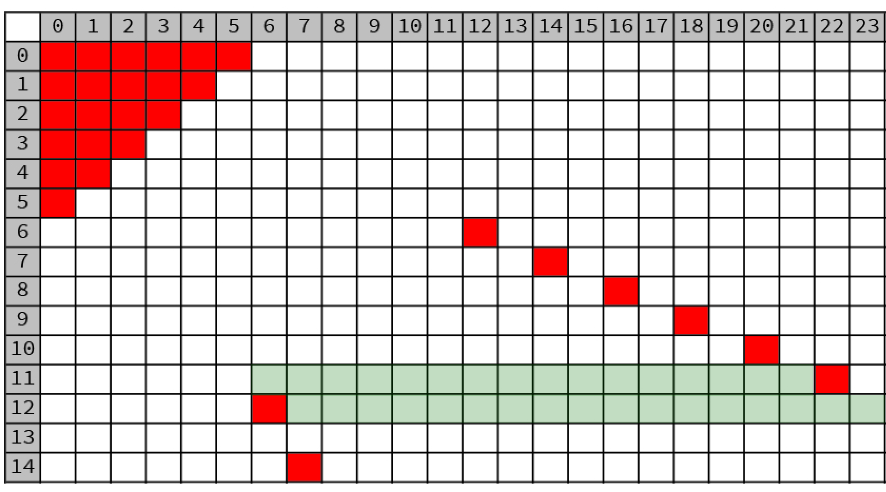}
\captionsetup{labelsep = period}
\caption{$\mathcal{P}$-positions near the terminal set}
\label{pvqueen0}
\end{center}
\end{figure}

\begin{figure}[H]
\begin{center}
\includegraphics[height=6.cm]{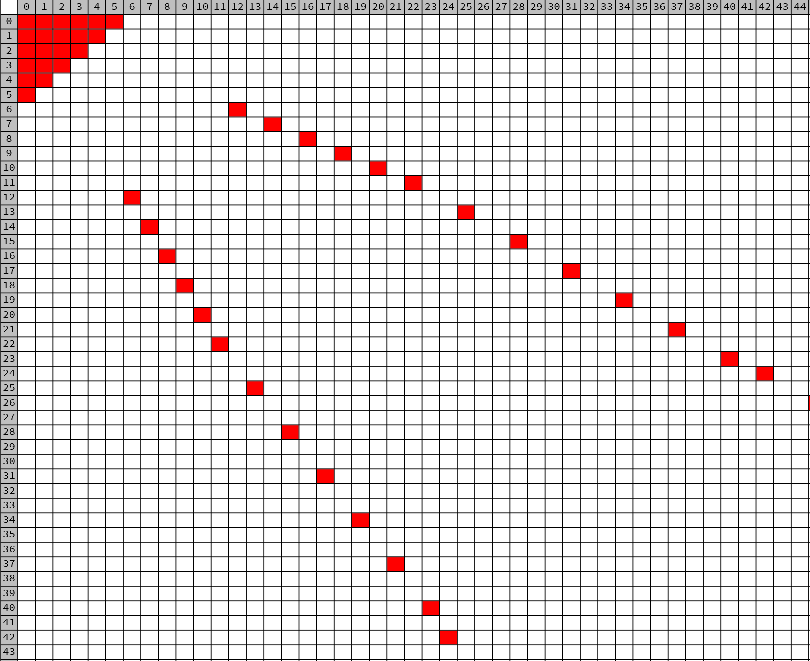}
\captionsetup{labelsep = period}
\caption{$P_5$ }
\label{pvqueen}
\end{center}
\end{figure}
For the mathematical induction step, we assume that the positions in $\{(x,y):x+y \leq 5\}$ $\cup \{(b_i,a_i):i=1,2,\dots, n-1\}$ $\cup \{(a_i,b_i):i=1,2,\dots, n-1\}$
are $\mathcal{P}$-positions, and $b_{m-1}=a_{n-1}+1$, where $m<n$.
We also assume that from the set of positions
$Q_{n-1,m-1}=\{(x,a_{n-1}):a_{m-1}\leq x \leq b_{n-1}-1\}$
$\cup \{(x,b_{m-1}):a_{m-1}+1 \leq x \leq b_{n-1}+1\}$,
we can move to $P_5$ by the diagonal move $M_3$. We present the set $Q_{n-1,m-1}$ as the green area in Figure \ref{inductiongraph}.
\begin{figure}[H]
\begin{center}
\includegraphics[height=1.3cm]{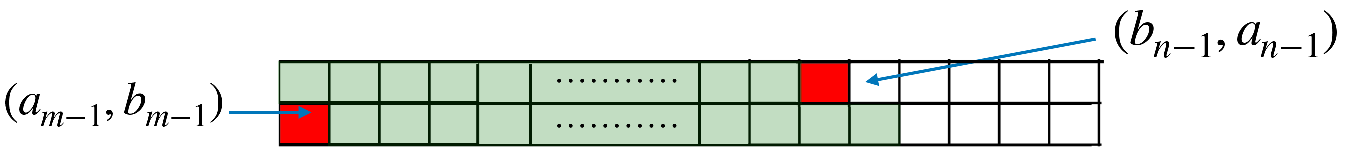}
\captionsetup{labelsep = period}
\caption{$Q_{n-1,m-1}$   }
\label{inductiongraph}
\end{center}
\end{figure}

We aim to prove that $(a_n,b_n)$ is a $\mathcal{P}$-position.
By Lemma \ref{lemmaforpoints},
\begin{equation}
b_{m-1}=a_{m-1}+m+4 \label{plusab2}
\end{equation}
and
\begin{equation}
b_m=a_{m}+m+5. \label{plusab3}  
\end{equation}
By Lemma \ref{twopattern}, 
we have 
\begin{equation}
b_m=b_{m-1}+2 \label{plus2}
\end{equation}
or
\begin{equation}
b_m=b_{m-1}+3. \label{plus3}  
\end{equation}
For $(\ref{plus2})$ and $(\ref{plus3})$, we need to prove two cases. \\
\noindent {\tt Case 1}:     
We suppose $(\ref{plus2})$. Then, by $(\ref{plusab2})$ and $(\ref{plusab3})$,
$a_m=a_{m-1}+1$. Hence, we have positions as in Figures \ref{point2} and \ref{point3}.
Note that $m<n-1$ and $b_{m-1}=a_{n-1}+1$.
By Lemmas \ref{lemmaforanbmsequence} and  \ref{positionsofpposition}, there should be a $\mathcal{P}$-position $(x,a_{n})$ with 
\begin{equation}
a_{n}=b_{m-1}+1=a_{n-1}+2.\label{anbm1}    
\end{equation}
Then, a $\mathcal{P}$-position $(x, a_{n})$ should be in Positions 1,2,3,4,5 or further right in Figure \ref{point3}.
If a $\mathcal{P}$-position $(x,a_{n})$ is in Position 1, then by the diagonal move $M_3$
you can move into $Q_{n-1,m-1}$. By the hypothesis, from a point in 
$Q_{n-1,m-1}$, you can move to a position in $P_5$ that is a $\mathcal{P}$-position by $M_3$.
Therefore, if $(x, a_{n}) \in P$ is in Positions 1, you can move to
 a $\mathcal{P}$-position by the diagonal move $M_3$. This contradicts the fact that 
 $(x,a_{n})$ is a $\mathcal{P}$-position.
If a $\mathcal{P}$-position $(x,a_{n})$ is in Position 2, you can move to the $\mathcal{P}$-position $(b_{n-1},a_{n-1})$ by $M_3$. 
If $(x, a_{n}) \in P$ is in Positions 4, 5, or further right, Position 3 should be an $\mathcal{N}$-position. 
By the diagonal move, we reach $(b_{n-1}+1, a_{n-1})$, but by Lemma \ref{cannotbym3}, we cannot move to a P-position from here by the diagonal move. Since  $x > b_{n-1}$, it is clear that we cannot move to a $\mathcal{P}$-position by 
$M_2$. This leads to a contradiction.
Therefore, $(x,a_{n})$ is in Positions 3 as presented in Figure \ref{point1}, and 
\begin{equation}
x=b_{n-1}+3.\label{syn13}    
\end{equation}
By Lemma \ref{lemmaforpoints},
$b_n=a_n+n+5$ and $b_{n-1}=a_{n-1}+n+4$, and 
hence by (\ref{anbm1}), we obtain $b_n=b_{n-1}+3$.
Therefore, by (\ref{syn13}), we establish that $x=b_n$. We prove that 
$(b_n,a_n)$ is a $\mathcal{P}$-position.
\begin{figure}[H]
\begin{tabular}{cc}
\begin{minipage}[t]{0.5\textwidth}
\begin{center}
	\includegraphics[height=1.4cm]{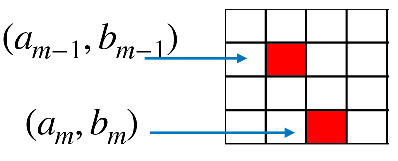}
\captionsetup{labelsep = period}
\caption{ }
\label{point2}
\end{center}
\end{minipage}
\begin{minipage}[t]{0.5\textwidth}
\begin{center}
	\includegraphics[height=1.43cm]{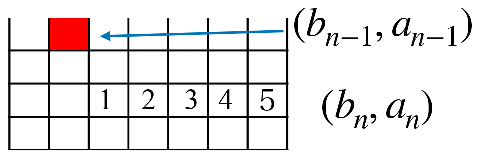}
\captionsetup{labelsep = period}
\caption{ }
\label{point3}
\end{center}
\end{minipage}
\end{tabular}
\end{figure}

\begin{figure}[H]
\begin{center}\includegraphics[height=1.7cm]{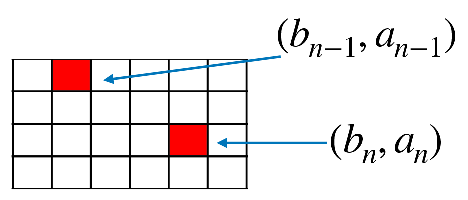}
\captionsetup{labelsep = period}
\caption{ }
\label{point1}
\end{center}
\end{figure}
From the set of positions
$Q_{n,m}=\{(x,a_{n}):a_{m}\leq x \leq b_{n}-1\}$
$\cup \{(x,b_{m}):a_{m}+1 \leq x \leq b_{n}+1\}$,
we can directly move to $P_5$ by the diagonal move $M_3$ or
go into $Q_{n-1,m-1}$ by the diagonal move $M_3$.

\begin{figure}[H]
\begin{center}\includegraphics[height=1.7cm]{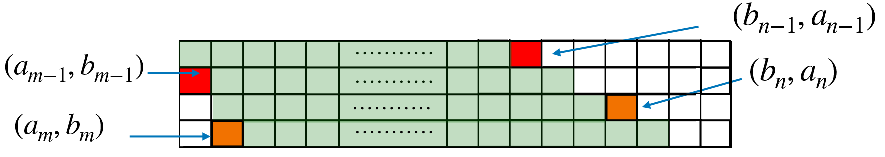}
\captionsetup{labelsep = period}
\caption{ }
\label{inductionlast1}
\end{center}
\end{figure}

\noindent {\tt Case 2}:   
We suppose $(\ref{plus3})$. Then, by $(\ref{plusab2})$ and $(\ref{plusab3})$,
$a_m=a_{m-1}+2$. Hence, the positions are shown in Figures \ref{ppoint1} and \ref{ppoint2}.
By Lemmas \ref{lemmaforanbmsequence} and  \ref{positionsofpposition}, 
there should be a $\mathcal{P}$-position $(x,a_{n})$. Here,
\begin{equation}
a_{n}=b_{m-1}+1=a_{n-1}+2. \label{anbm2}   
\end{equation}
By the method that is similar to the one used in Case 1, we obtain that 
$x=b_{n-1}+3$. The $\mathcal{P}$-position $(x,a_{n})$ is presented in Figure \ref{ppoint3}.
By Lemma \ref{lemmaforpoints},
$b_n=a_n+n+5$ and $b_{n-1}=a_{n-1}+n+4$, and 
hence by (\ref{anbm2}), we obtain $b_n=b_{n-1}+3$.
Therefore, $x=b_n$, and we prove that
$(b_{n},a_n)$ is a $\mathcal{P}$-position. By Lemmas \ref{lemmaforanbmsequence} and  \ref{positionsofpposition}, 
there should be a $\mathcal{P}$-position $(x,a_{n+1})$. Here, $a_{n+1}+1=b_{m}$. By the method that is similar to the one used in Case 1, we obtain that $x=b_{n+1}$, and we prove that 
The position of $(b_{n+1},a_{n+1})$ is a $\mathcal{P}$-position.

\begin{figure}[H]
\begin{tabular}{cc}
\begin{minipage}[t]{0.5\textwidth}
\begin{center}
	\includegraphics[height=1.65cm]{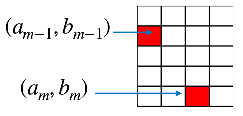}
\captionsetup{labelsep = period}
\caption{ }
\label{ppoint1}
\end{center}
\end{minipage}
\begin{minipage}[t]{0.5\textwidth}
\begin{center}
	\includegraphics[height=1.78cm]{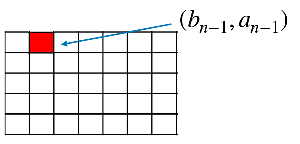}
\captionsetup{labelsep = period}
\caption{ }
\label{ppoint2}
\end{center}
\end{minipage}
\end{tabular}
\end{figure}

\begin{figure}[H]
\begin{tabular}{cc}
\begin{minipage}[t]{0.5\textwidth}
\begin{center}
	\includegraphics[height=1.87cm]{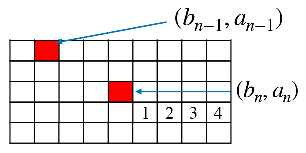}
\captionsetup{labelsep = period}
\caption{Case 1-1}
\label{ppoint3}
\end{center}
\end{minipage}
\begin{minipage}[t]{0.5\textwidth}
\begin{center}
	\includegraphics[height=1.87cm]{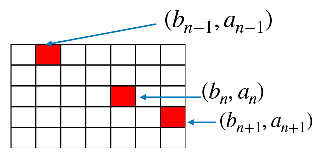}
\captionsetup{labelsep = period}
\caption{Case 1-1}
\label{ppoint4}
\end{center}
\end{minipage}
\end{tabular}
\end{figure}

From the set of positions
$Q_{n+1,m}=\{(x,a_{n}+1):a_{m}\leq x \leq b_{n+1}-1\}$
$\cup \{(x,b_{m}):a_{m}+1 \leq x \leq b_{n+1}+1\}$,
we can directly move to $P_5$ by the diagonal move $M_3$ or
go into $Q_{n-1,m-1}$ by the diagonal move $M_3$.
\begin{figure}[H]
\begin{center}\includegraphics[height=2.13cm]{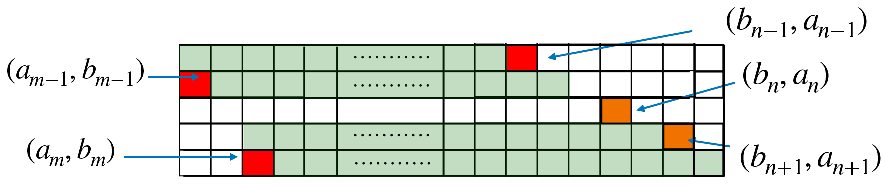}
\captionsetup{labelsep = period}
\caption{ }
\label{inductionlast1}
\end{center}
\end{figure}
By Cases 1 and 2, we completed the proof by mathematical induction.
\end{proof}

\section{Generalization}
In the previous sections, we considered the case when the terminal set is $\{(x,y):x+y \leq 5\}$; however, we can generalize this case to the case that the terminal set is $\{(x,y):x+y \leq k\}$ with a natural number $k$
 without any difficulty.

Let $C_{i}^{(k)}=\biguplus_{j=1}^k C_{i,j}$, where $C_{i,j}=C_{i,1}$ defined in Section \ref{fiboword}.
We define the infinite sequence
$\{c_n : n \in \mathbb{N}\} = C_1^{(k)} \uplus C_2^{(k)} \uplus C_3^{(k)} \uplus \cdots $, and we define the sequence $d_n = c_n + 1$ for $n = 1,2,\dots$.

Next, for $n \in \mathbb{N}$, we let
\[
a_n = a_1 + \sum_{i=1}^{n-1} c_i, \qquad a_1 = k+1,
\]
and
\[
b_n = b_1 + \sum_{i=1}^{n-1} d_i, \qquad b_1 = 2k+2.
\]
Then the difference sequence of $\{a_n\}_{n \in \mathbb{N}}$ is
$\{c_n\}_{n \in \mathbb{N}}$, and the difference sequence of
$\{b_n\}_{n \in \mathbb{N}}$ is $\{d_n\}_{n \in \mathbb{N}}$.

\begin{theorem}\label{bneqannkk}
We obtain the following:\\
$(i)$
\begin{align}
a_{k(F_{n+1}-1)+hF_n+t+1} 
 = kF_{n+2}-k+1+hF_{n+1}  \nonumber + \lfloor (F_{n+1}+1+t) \varphi \rfloor  -\lfloor (F_{n+1}+1) \varphi \rfloor  \nonumber  
\end{align}
for $h,t \in \mathbb{Z}_{\geq0}$ such that $0 \leq h \leq 4$ and $1 \leq t \leq F_n$,\\
$(ii)$
\begin{align}
b_{k(F_{n+1}-1)+hF_n+t+1}
 = 
 kF_{n+3}+hF_{n+2} -k+t +2 + \lfloor (F_{n+1}+1+t) \varphi \rfloor  -\lfloor (F_{n+1}+1) \varphi \rfloor  \nonumber  
\end{align}
for $h,t \in \mathbb{Z}_{\geq0}$ such that $0 \leq h \leq k-1$ and $1 \leq t \leq F_n$.
\end{theorem}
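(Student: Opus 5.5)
The plan is to repeat the argument of Theorem~\ref{bneqann5} with the constant $5$ replaced by $k$. First I generalize the two lemmas used there. In (i) I read the range of $h$ as $0 \leq h \leq k-1$, matching (ii); the bound ``$h \leq 4$'' is a leftover from the case $k=5$.

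For the lemma on block sizes and sums, I note that $C^{(k)}_{i+2}=C^{(k)}_i \uplus C^{(k)}_{i+1}$ blockwise, since each $C_{i,j}=C_{i,1}$ and $C_{i+2,1}=C_{i+1,1}\uplus C_{i,1}$ up to the order of the pieces. Together with $|C_{1,1}|=|C_{2,1}|=1$, $\sum C_{1,1}=1$ and $\sum C_{2,1}=2$, this gives $|C_{i,1}|=F_i$ and $\sum\{x: x\in C_{i,1}\}=F_{i+1}$. Hence $|C^{(k)}_i|=kF_i$ and $\sum\{x: x\in C^{(k)}_i\}=kF_{i+1}$.

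For the analogue of Lemma~\ref{lemmaforpoints}(i), I use $d_i=c_i+1$ and $b_1-a_1=(2k+2)-(k+1)=k+1$. Induction on $n$ then gives $b_n=a_n+n+k$.

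For (i), let $N=k(F_{n+1}-1)+hF_n+t+1$. Since $k(F_1+\cdots+F_{n-1})=k(F_{n+1}-1)$, computing $a_N$ uses $a_1$, the full blocks $C^{(k)}_1,\dots,C^{(k)}_{n-1}$, the full sub-blocks $C_{n,1},\dots,C_{n,h}$, and the first $t$ entries of $C_{n,h+1}=C_{n,1}$. The condition $h\leq k-1$ ensures the sub-block $C_{n,h+1}$ exists, and $t\leq F_n$ ensures these $t$ entries lie inside it. By (\ref{ourseq3}), those $t$ entries are $\lfloor (F_{n+1}+i+1)\varphi\rfloor-\lfloor (F_{n+1}+i)\varphi\rfloor$ for $i=1,\dots,t$. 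Their sum telescopes to $\lfloor (F_{n+1}+1+t)\varphi\rfloor-\lfloor (F_{n+1}+1)\varphi\rfloor$. Adding the pieces gives
\[
a_N=(k+1)+k\sum_{i=1}^{n-1}F_{i+1}+hF_{n+1}+\lfloor (F_{n+1}+1+t)\varphi\rfloor-\lfloor (F_{n+1}+1)\varphi\rfloor .
\]
Using $\sum_{i=2}^{n}F_i=F_{n+2}-2$, the constant part becomes $kF_{n+2}-k+1+hF_{n+1}$, which is (i).

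For (ii), I substitute into $b_N=a_N+N+k$. The constant part is $kF_{n+2}-k+1+hF_{n+1}+kF_{n+1}-k+hF_n+t+1+k$. Using $F_{n+2}+F_{n+1}=F_{n+3}$ and $F_{n+1}+F_n=F_{n+2}$, this simplifies to $kF_{n+3}+hF_{n+2}-k+t+2$, with the same floor terms, which is (ii).

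The only delicate point is the indexing in the step that invokes (\ref{ourseq3}). I must check that the $t$-th entry of $C_{n,1}$ really corresponds to the offset $F_{n+1}+t-1$ in (\ref{ourseq}), so the telescoped sum starts at $\lfloor (F_{n+1}+1)\varphi\rfloor$ and not one term earlier or later. I would first verify this against the tables for small $n$ (for $k=5$, $n=3$, $h=0$, $t=1$). If it is off by one, I would correct the statement of (\ref{ourseq3}) before using it. Everything else is Fibonacci bookkeeping.
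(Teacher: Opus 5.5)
Your proposal is correct and follows the paper's route: it is the $k=5$ argument with $5$ replaced by $k$. You sum the difference sequence block by block ($C^{(k)}_1,\dots,C^{(k)}_{n-1}$, then $C_{n,1},\dots,C_{n,h}$, then the first $t$ entries of $C_{n,h+1}$ via (\ref{ourseq3})) and obtain (ii) from $b_n=a_n+n+k$, and reading the range as $0\le h\le k-1$ is the right correction. The indexing check you flag passes: for $k=5$, $n=3$, $h=0$, $t=1$ the formula gives $21+\lfloor 5\varphi\rfloor-\lfloor 4\varphi\rfloor=23=a_{12}$, so (\ref{ourseq3}) needs no adjustment.
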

The proof of this theorem is almost the same as that for $k=5$. We omit the proof.
\begin{definition}\label{wythoffppk}
Let 
\begin{equation}
P_{k,0}=\{(x,y):x,y \in \mathbb{Z}_{\geq0} \text{ and } x+y \leq k \}, \nonumber    
\end{equation}
\begin{equation}
P_{k,1}=\{(b_n,a_n ):n \in \mathbb{N} \}, \nonumber    
\end{equation}
\begin{equation}
P_{k,2}= \{(a_n,b_n):n \in \mathbb{N} \}, \nonumber    
\end{equation}
and
\begin{equation}
P_k=P_{k,0} \cup P_{k,1} \cup P_{k,2}. \nonumber    
\end{equation}
\end{definition}

\begin{theorem}
The set of $\mathcal{P}$-positions of this variant of Wythoff's game is  $P_k$.
\end{theorem}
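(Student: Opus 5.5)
The plan is the standard characterization of $\mathcal{P}$-positions. I will show that $P_k$ is closed under no moves, meaning no move leads from a position of $P_k\setminus P_{k,0}$ to a position of $P_k$. I will also show that from every position outside $P_k$ some move reaches $P_k$. Since positions in $P_{k,0}$ have no moves and are winning targets, the two facts together, with Theorem \ref{theoremofsumg} or a direct induction on $x+y$, identify $P_k$ as the set of $\mathcal{P}$-positions.

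First I will establish the general-$k$ analogues of the combinatorial lemmas, all by the same arguments as for $k=5$. The analogue of Lemma \ref{lemmaforpoints} gives $|C_i^{(k)}|=kF_i$ and $\sum C_i^{(k)}=kF_{i+1}$. It also gives $b_n=a_n+n+k$, since $b_1-a_1=k+1$ and $d_i-c_i=1$, and the boundary values $a_{kF_{n+2}-k+1}=kF_{n+3}-k+1$ and $b_{kF_{n+1}-k+1}=a_{kF_{n+2}-k+1}+1$. The analogue of Lemma \ref{lemmaforanbmsequence} follows from $\rho(D_n^{(k)})=C_{n+1}^{(k)}$ and the same interleaving induction. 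It shows that $\{a_n\}$ and $\{b_n\}$ are disjoint and that their union is $\{k+1,k+2,\dots\}$. The base case now checks $a_1=k+1$, $b_1=2k+2=a_1+k+1$, and that the blocks start correctly with $d=2$ everywhere in $D_1$ and $C_2$ equal to all $2$'s.

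For the no-move property, a horizontal or vertical move from $(a_n,b_n)$ or $(b_n,a_n)$ cannot reach another point of $P_{k,1}\cup P_{k,2}$, because each integer $\ge k+1$ occurs exactly once among all $a_i,b_i$. Such a move also cannot reach $P_{k,0}$, since the fixed coordinate is $\ge k+1$. For diagonal moves, the differences $b_n-a_n=n+k$ are strictly increasing and all exceed $k$, so two points of $P_{k,1}\cup P_{k,2}$ never lie on a common diagonal. A diagonal from $(b_n,a_n)$ reaches at best $(b_n-a_n,0)=(n+k,0)$, or, on the line, positions whose coordinate sum is $b_n+a_n-2t$ with difference $n+k>k$, so it never enters $\{x+y\le k\}$. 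This is the content of Lemma \ref{cannotbym3} made uniform.

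For the reachability property, take $(x,y)\notin P_k$ with $x+y>k$. If $x\le k$ or $y\le k$, move vertically or horizontally into $P_{k,0}$. Otherwise $y\ge k+1$, so $y$ equals some $a_n$ or $b_n$. The point of $P_k$ with second coordinate $y$ is $(b_n,y)$ or $(a_n,y)$. If $x$ exceeds that first coordinate, a horizontal move reaches it. Otherwise $x$ is smaller. Symmetrically I treat the first coordinate, which leaves the case where both partners exceed the current coordinates. In that case I will use the diagonal. Let $\delta=|x-y|$. If $\delta\le k$, move diagonally to $(x-\min,y-\min)$, which has sum $\delta\le k$ and lies in the terminal set. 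If $\delta>k$, write $\delta=n+k$ and move diagonally to $(a_n,b_n)$ or $(b_n,a_n)$. I must check that this is a downward move, i.e.\ $\min(x,y)>a_n$.

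This last verification is the main obstacle. One has to show that when both "partner" coordinates exceed the current ones, the diagonal target $a_n$ with $n=\delta-k$ is below $\min(x,y)$. I would argue it by the usual mex-style counting. Assume $x<y$, $y=a_m$ with partner $b_m>x$, so $\delta=y-x<b_m-a_m$, i.e.\ $n<m$ and $a_n<a_m=y$. Then $a_n=y-\delta$ fails to hold exactly, and I need $a_n<x$. Equivalently, $a_n+n+k<y$ must fail in the right direction. I would handle this through the monotonicity of $b_n=a_n+n+k$ in $n$ together with the disjoint covering. If the diagonal target were not below, then $x$ itself would have to be an $a$- or $b$-value whose partner lies on a smaller diagonal, producing a contradiction with uniqueness. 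If this counting becomes messy, I will fall back on the paper's own inductive scheme, the $Q_{n,m}$ regions with Cases 1 and 2 according to $d=2$ or $3$, which carries over verbatim with $5$ replaced by $k$.
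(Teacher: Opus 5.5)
Your strategy of checking directly that $P_k$ is independent (no move joins two of its positions) and absorbing (every position outside it has a move into it) is sound. Your preliminary facts and the no-move half are correct. However, the step you single out as the main obstacle is argued incorrectly.

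When $x<y=a_m$, the condition $x<b_m$ holds automatically, so it carries no information. The inequality $\delta=y-x<b_m-a_m$ does not follow from it. Even granted, it only gives $a_n<a_m=y$, not the needed $a_n<x$.

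The useful information is the partner of the \emph{smaller} coordinate. Since $x\ge k+1$, either $x=b_j$ or $x=a_j$. If $x=b_j$, then $a_j<x<y$ and the vertical move to $(b_j,a_j)$ exists, so this is not the residual case. Hence $x=a_j$. Since the vertical move to $(a_j,b_j)$ is unavailable and $(x,y)\notin P_k$, we get $y<b_j$. Therefore $n+k=\delta=y-a_j<b_j-a_j=j+k$, so $1\le n<j$, and $a_n<a_j=x$ because $(a_n)$ is strictly increasing. The diagonal move of length $x-a_n$ then lands on $(a_n,a_n+n+k)=(a_n,b_n)$. The case $x>y$ is symmetric.

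Your last sentence points in this direction, but the contradiction is with the assumption that no vertical move works, not with uniqueness. With this repair the argument is complete, and the check on the partner of $y$ turns out to be superfluous. No fallback to the paper's induction is needed.

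The paper instead builds the $\mathcal{P}$-positions row by row. It uses Lemma \ref{positionsofpposition} to guarantee a $\mathcal{P}$-position in each row, then excludes candidate columns via Lemma \ref{cannotbym3} and the auxiliary regions $Q_{n,m}$, with separate cases $d=2$ and $d=3$. That explains why the $\mathcal{P}$-positions are forced to be where they are. But the bookkeeping is figure-driven and is only asserted to carry over to general $k$.

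Your verification needs just two arithmetic facts: $\{a_n\}$ and $\{b_n\}$ partition $\{k+1,k+2,\dots\}$ with $(a_n)$ increasing, and $b_n-a_n=n+k$. This buys three things:
\begin{itemize}
\item The Fibonacci substitution enters only through the analogue of Lemma \ref{lemmaforanbmsequence}, whose interleaving proof via $\rho(D_n)=C_{n+1}$ does go through for every $k$.
\item Lemma \ref{cannotbym3} reduces to the observation that the points of $P_{k,1}\cup P_{k,2}$ lie on pairwise distinct diagonals $|x-y|=n+k>k$.
\item The whole argument is uniform in $k$.
\end{itemize}
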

The proof of this theorem is almost the same as that for $k=5$. We omit the proof.

\end{document}